\numberwithin{equation}{section}
\numberwithin{figure}{section}
\theoremstyle{plain}
\newtheorem{thm}{\protect\theoremname}
\theoremstyle{plain}
\newtheorem{prop}[thm]{Proposition}
\newtheorem{question}[thm]{Question}
\newtheorem{lem}[thm]{\protect\lemmaname}
\newtheorem{claim}{\protect\theoremname}
\newtheorem{defn}[thm]{Definition}
\newtheorem{cor}[thm]{Corollary}
\providecommand{\lemmaname}{Lemma}
\providecommand{\theoremname}{Theorem}
\providecommand{\theoremname}{Claim}
\providecommand{\theoremname}{Preposition}
\providecommand{\theoremname}{Definition}
\newcommand{\ohad}[1]{\todo[size=\tiny, color=pink]{Ohad: #1}}
\title{The Forbidden Cross Intersection Problem for Permutations}
\author[N. Keller]{Nathan Keller}
\address{Department of Mathematics, Bar Ilan University, Ramat Gan, Israel}
\email{nkeller@math.biu.ac.il}
\thanks{The work of the first author is partially supported by the Israel Science Foundation (grants no.~2669/21 and 2456/25) and by the US-Israel Binational Science Foundation (grant no.~2024120). The work of the second author is partially supported by the European Research Council (StG no.~101163794), by the Israel Science Foundation (grant no.~1980/22), and by the US-Israel Binational Science Foundation (grant no.~2024120).}
\author[N. Lifshitz]{Noam Lifshitz}
\address{Einstein Institute of Mathematics, Hebrew University, Jerusalem, Israel}
\email{noamlifshitz@gmail.com}
\author[O. Sheinfeld]{Ohad Sheinfeld}
\address{Einstein Institute of Mathematics, Hebrew University, Jerusalem, Israel}
\email{oshenfeld@gmail.com}
\begin{document}

\maketitle

\begin{abstract}
    We prove the following, for a universal constant $c>0$. Let $n \in \mathbb{N}$ and $1 \leq t<c\frac{n}{\log n}$. Let $F,G \subset S_n$ be families of permutations such that no $\sigma \in F$ and $\tau \in G$ agree on exactly $t-1$ values. Then $|F||G| \leq ((n-t)!)^2$, with equality if and only if $F=G=\{\sigma \in S_n:\sigma(i_1)=j_1,\ldots,\sigma(i_t)=j_t\}$, for some $i_1,\ldots,i_t,j_1,\ldots,j_t \in \{1,2,\ldots,n\}$. 
    The range of values of $t$ in the result is essentially optimal, as for any $\epsilon>0$, the statement fails for $t=(1+\epsilon)\frac{n}{\log_2 n}$ and all $n>n_0(\epsilon)$. This solves the cross-intersection variant of the Erd\H{o}s-S\'{o}s forbidden intersection problem for permutations. The best previously known result, by Kupavskii and Zakharov (Adv.~Math., 2024), obtained the same assertion for $t \leq \tilde{O}(n^{1/3})$. We obtain our result by combining two recently introduced techniques: hypercontractivity of global functions and spreadness.
\end{abstract}

\section{Introduction}

\subsection{Previous work}

A family $F$ of subsets of $[n]=\{1,2,\ldots,n\}$ is called \emph{$t$-intersecting} if for any $A,B \in F$, we have $|A \cap B| \geq t$, and is called \emph{$(t-1)$-intersection free} if for any $A,B \in F$, we have 
$|A \cap B| \neq t-1$. Similarly, a pair of families $F,G$ is called \emph{cross $t$-intersecting} (resp., \emph{cross $(t-1)$-intersection free}) if $|A \cap B| \geq t$ (resp., $|A \cap B| \neq t-1$) for all $A \in F, B \in G$. 

\medskip \noindent \textbf{The $t$-intersection problem and the forbidden $(t-1)$-intersection problem for families of sets.} The problems of determining the maximum size of $t$-intersecting and $(t-1)$-intersection free families of $k$-element subsets of $[n]$ are among the best known problems in extremal combinatorics. The \emph{$t$-intersection problem} was raised in the 1961 paper of Erd\H{o}s, Ko, and Rado~\cite{EKR61} that initiated the research area of \emph{intersection theorems} (see~\cite{FT16}). Following partial results of Frankl~\cite{F78} and Wilson~\cite{Wilson84}, it was fully solved in 1997 in the celebrated \emph{complete intersection theorem} of Ahlswede and Khachatrian~\cite{AK97}. The theorem asserts that for each triple $(n,k,t)$, one of the \emph{Frankl families} $F_{n,k,t,r}=\{S \subset \binom{[n]}{k}:|S\cap [t+2r]| \geq t+r\}$ has the maximum size. 
The \emph{forbidden $(t-1)$-intersection  problem} turned out to be significantly harder. Raised in 1971 by Erd\H{o}s for $t=2$ and a few years later by Erd\H{o}s and S\'{o}s~\cite{Erd76} for a general $t$, this problem was studied in numerous works, and yet, a solution was obtained only for certain ranges of $(n,k,t)$. 

\medskip \noindent \textbf{Previous results on the forbidden $(t-1)$-intersection problem for families of sets.} For constant values of $t$, Erd\H{o}s and S\'{o}s conjectured that for sufficiently large $n$, the maximum size of a $(t-1)$-intersection free family $F \subset \binom{[n]}{k}$ is $\binom{n-t}{k-t}$, attained by the $t$-intersecting family $F=\{S \in \binom{[n]}{k}:[t] \subset S\}$. This conjecture was proved for all $k \geq 2t,n>n_0(k,t)$ in an influential paper of Frankl and F\"{u}redi~\cite{frankl1985forbidding}. Ellis et al.~\cite{EKL16} (basing upon~\cite{NN21}) proved it for all $(n,k,t)$ such that $k \geq 2t, n \geq n_0(t)$, and $n \geq (k-t+1)(t+1)$, which is the entire range of values of $k$ in which the maximum size of a $t$-intersecting family is  
$\binom{n-t}{k-t}$. They also showed that the maximum size of a $(t-1)$-intersection free family is equal to the maximum size of a $t$-intersecting family for 
all $2t \leq k \leq (\frac{1}{2}-\zeta)n, n>n_0(t,\zeta)$. Very recently, Kupavskii and Zakharov~\cite{KZ22} were the first to prove the Erd\H{o}s-S\'{o}s conjecture also in a setting where $t$ grows to infinity with $n$. They showed that it holds for $n = \lceil k^\alpha \rceil, t = \lceil k^\beta \rceil$, where    $0<\beta<0.5 $, $\alpha > 1+2\beta $, and $k\ge k_0(\alpha,\beta)$.

Another extensively studied range is $\epsilon n < t \leq (\frac{1}{2}-\epsilon)n$. In the 70's, Erd\H{o}s offered 250\$ for proving that in this range, the size of any $(t-1)$-intersection free family is less than $(2-\delta)^n$, where $\delta=\delta(\epsilon)$. This was proved in a strong form in the celebrated Frankl-R\"{o}dl~\cite{FranklR87} theorem, which led to significant applications in different areas, including discrete geometry~\cite{FranklR90}, communication complexity~\cite{Sgall99} and quantum computing~\cite{BuhrmanCW98}. Obtaining exact bounds for all $(n,k,t)$ looks completely elusive as for now. 

\smallskip \noindent \textbf{Families of permutations.} In the last decades, the $t$-intersection problem and the forbidden intersection problem were studied in 
various other settings -- e.g., for graphs~\cite{EFF12}, set partitions~\cite{MM05}, linear maps~\cite{Linear-maps}, and codes~\cite{keevash2023forbidden}. Arguably, the most thoroughly studied setting (except for $k$-subsets of $[n]$) is \emph{families of permutations}. A family $F \subset S_n$ is $t$-intersecting (resp., $(t-1)$-intersection free) if for any $\sigma,\tau \in F$, $|\{i \in [n]:\sigma(i)=\tau(i)\}|\geq t$ (resp., 
$|\{i \in [n]:\sigma(i)=\tau(i)\}|\neq t-1$). The notions of cross $t$-intersecting and cross $(t-1)$-intersection free pairs of families of permutations are defined similarly. 

In 1977, Deza and Frankl~\cite{DF77} asked whether an analogue of the  Erd\H{o}s-Ko-Rado theorem holds for families of permutations -- i.e., whether for any $n \geq n_0(t)$ the maximum size of a $t$-intersecting family of permutations is $(n-t)!$ (which is attained by the family $\{\sigma \in S_n: \forall i \in [t], \sigma(i)=i\}$). The question turned out to be much harder than the $t$-intersection problem for subsets of $[n]$. Deza and Frankl managed to prove their conjecture only for $t=1$, and the case $t>1$ remained open for over 30 years until it was resolved by Ellis, Friedgut and Pilpel~\cite{EFP11} in 2011. Ellis et al.~proved the  conjecture for all $n > n_0(t)$ (not specifying the value of $n_0$) and conjectured that an analogue of the complete intersection theorem holds for families of permutations for all $(n,t)$. Despite significant progress in recent years (including~\cite{DN22,keller2024t,KZ22}), this conjecture is still open. The best known result, by Kupavskii~\cite{Kup24a}, proves the conjecture of Ellis et al.~\cite{EFP11} for all $t<(1-\epsilon)n$ and $n>n_0(\epsilon)$.  

\smallskip \noindent \textbf{Previous results on the forbidden $(t-1)$-intersection problem for families of permutations.} In 2014, Ellis~\cite{Ell14} initiated the study of the forbidden intersection problem for permutations. He showed that the maximum size of a $1$-intersection free family of permutations is $(n-2)!$, and conjectured that for all $n \geq n_0(t)$, the maximum size of a $(t-1)$-intersection free family $F \subset S_n$ is $(n-t)!$. 
Ku et al.~\cite{KuLW17} proved the weaker upper bound $(n-\frac{t}{2})!$ for a sufficiently large $n$. Both works used eigenvalue techniques and the representation theory of $S_n$. Ellis and Lifshitz~\cite{DN22} showed that the conjecture of Ellis holds for all $t=O(\frac{\log n}{\log\log n})$, using the discrete Fourier-analytic \emph{junta method}~\cite{NN21}, along with a representation-theoretic argument. 
The best known result on the problem was obtained in a recent work of Kupavskii and Zakharov~\cite{KZ22} who proved
the conjecture for all $t=\tilde{O}(n^{1/3})$.\footnote{In~\cite{keller2024t}, the authors claimed that the techniques they used for the $t$-intersection problem in $S_n$ can be used to prove the conjecture of Ellis on the forbidden $(t-1)$-intersection problem for all $t \leq \tilde{O}(n^{1/2})$. However, no proof was provided.} Unlike previous works in this direction, the work of Kupavskii and Zakharov does not use any specific properties of $S_n$ (and in particular, its representation theory). Instead, they prove that the  conjecture holds for the general setting of $(t-1)$-intersection free families in $\mathcal{U}$, where the `universe' $\mathcal{U}$ is a `pseudorandom' sub-family of ${N}\choose{M}$, and show that $S_n$ can be viewed as a `pseudorandom' sub-family of ${n^2}\choose{n}$. The technique of Kupavskii and Zakharov, called the \emph{spread approximation method}, builds on the techniques developed in the breakthrough result of Alweiss et al.~\cite{ALWZ21} on the Sunflower Lemma. This method was further developed in a series of very recent papers and was used to obtain numerous applications to intersection theorems in various settings (see, e.g.,~\cite{FranklK25,Kupavskii23partitions,Kupavskii23hereditary,KupavskiiN24}). 

In the range $\epsilon n < t \leq (1-\epsilon)n$, a Frankl-R\"{o}dl-type theorem was obtained by Keevash and Long~\cite{KL17}. They showed that any $(t-1)$-intersection free family $F \subset S_n$ satisfies $|F| \leq (n!)^{1-\delta}$, where $\delta=\delta(\epsilon)$.

\medskip \noindent \textbf{The cross-intersection setting.}
In many of the works on the $t$-intersection problem and on the forbidden intersection problem, both for sets and for permutations,  the assertion is obtained via a seemingly stronger result -- a sharp upper bound on $|F||G|$, where $F,G$ are cross $t$-intersecting and cross $(t-1)$-intersection free families, respectively (see, e.g.,~\cite{Ell14,EKL16,DN22,FranklR87,NN21,keller2024t}). Hence, the solution of the cross-intersection variant of the problem is interesting not only for its own sake, but also as an important step toward the corresponding intersection problem. For more details on works in the cross-intersection setting, the reader is referred to~\cite[Section~10]{FT16}.

\subsection{Our results}
In this paper we consider the cross-intersection variant of the forbidden intersection problem in $S_n$. In this setting, we prove an essentially optimal variant of the aforementioned conjecture of Ellis~\cite{Ell14}.
\begin{thm}\label{thm:main}
    There exists $c>0$ such that the following holds.
    Let $n,t \in \mathbb{N}$ such that $t\leq c \frac{n}{\log{n}}$. Let $F,G \subset S_n$ such that for any $\sigma \in F, \tau \in G$, $|\{i \in [n]:\sigma(i)=\tau(i)\}|\neq t-1$. Then 
    \[
    |F||G| \leq (n-t)!^2.
    \]
\end{thm}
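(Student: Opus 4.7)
The plan is to combine the spread approximation machinery of Kupavskii--Zakharov \cite{KZ22} with the hypercontractive inequality for \emph{global} functions on $S_n$. The overall scheme is a dichotomy: for each of $F$ and $G$ we either find a coset $\{\sigma:\sigma(i_1)=j_1,\ldots,\sigma(i_s)=j_s\}$ (with $s<t$) that contains a disproportionate share of the family, in which case we condition on it and reduce to a smaller instance, or the family is globally spread, and then the cross $(t-1)$-intersection-free hypothesis combined with hypercontractivity forces the product of its size with the other family to be much smaller than $((n-t)!)^2$. Iterating, we eventually arrive at pairs essentially contained in a single $t$-coset, for which the bound $|F||G| \le ((n-t)!)^2$ and the equality cases follow by direct inspection.

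\textbf{Step 1 (spread approximation).} We apply a permutation-adapted version of the spread approximation theorem to decompose $F = F_{\mathrm{cap}} \sqcup F_{\mathrm{spr}}$ and $G = G_{\mathrm{cap}} \sqcup G_{\mathrm{spr}}$, where $F_{\mathrm{cap}}$ lies in a small union of cosets of codimension at most $t-1$ whose densities in $F$ are $\gg \alpha$, and $F_{\mathrm{spr}}$ is $\alpha$-spread in the sense that $|F_{\mathrm{spr}} \cap C_s|/|C_s| \le \alpha$ for every $s$-coset $C_s$ with $s \le t$. The spread parameter $\alpha$ is chosen polynomially small so that $\log(1/\alpha) \asymp t$, matching the scale at which hypercontractivity becomes effective.

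\textbf{Step 2 (hypercontractive spectral bound).} Let $T$ be the operator on $L^2(S_n)$ with kernel $T(\sigma,\tau) = \mathbf{1}[|\{i:\sigma(i)=\tau(i)\}|=t-1]$. The cross $(t-1)$-intersection-free condition restricted to the spread part translates to $\langle \mathbf{1}_{F_{\mathrm{spr}}}, T\mathbf{1}_{G_{\mathrm{spr}}}\rangle = 0$. Since $T$ commutes with the two-sided $S_n\times S_n$ action, it is diagonalised by the Young basis with eigenvalues $\mu_\lambda$ indexed by partitions $\lambda \vdash n$; the trivial representation supplies the dominant eigenvalue (the expected agreement probability), while the eigenvalues on partitions whose first row is shorter than $n-d$ decay rapidly with $d$. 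The level-$d$ inequality for global functions (with $d \asymp t$) tells us that for an $\alpha$-spread function the mass on the low-height partitions is negligible. Combining these, the trivial-representation term dominates, so the vanishing of the inner product forces $|F_{\mathrm{spr}}||G_{\mathrm{spr}}|$ to be much smaller than $((n-t)!)^2$.

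\textbf{Iteration and main obstacle.} The captured case is handled inductively: fixing a coset $C$ of codimension $s$ converts $(F\cap C,G\cap C)$ into an instance on $S_{n-s}$ with forbidden intersection size $t-1-s$, and the hypothesis $t \le cn/\log n$ is preserved under $(n,t)\mapsto(n-s,t-s)$ because $x/\log x$ is increasing. The main technical obstacle is the joint calibration of $\alpha$ and $d$ in the regime $t \sim n/\log n$: here the gap between the trivial eigenvalue of $T$ and the next relevant ones is only polynomial in $n$, so the argument requires a sharp level-$d$ inequality for \emph{global} functions on $S_n$ that does not lose $(Cd)^{d}$-type factors as $d$ grows. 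A secondary difficulty is the equality case, which should fall out of a stability version of the hypercontractive step: if $|F||G|$ is close to $((n-t)!)^2$ then $F_{\mathrm{spr}}, G_{\mathrm{spr}}$ must be negligible and the captured parts must essentially collapse to a single $t$-coset.
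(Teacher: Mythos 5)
Your proposal is not correct as it stands, and it follows a genuinely different route from the paper that runs into an obstacle you acknowledge but do not overcome. The central difficulty is Step~2: you want to diagonalise the operator $T$ with kernel $\mathbf{1}[\,|\{i:\sigma(i)=\tau(i)\}|=t-1\,]$ in the Young basis and argue that the trivial eigenvalue dominates once the low levels are controlled by the level-$d$ inequality. This is precisely the classical representation-theoretic scheme (Ellis; Ku--Lau--Wong), and it is precisely what breaks when $t$ grows: for $t \sim n/\log n$ the eigenvalues of $T$ on partitions whose first row is shorter by $\Theta(t)$ boxes are not uniformly small compared with the trivial eigenvalue, there are very many such partitions, and no level-$d$ inequality --- however sharp --- can discard them, because they live at levels $d \gtrsim t$ where a spread (global) family may legitimately carry most of its Fourier weight. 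You flag this calibration issue yourself, but the proof has to resolve it, not merely note it; and the resolution cannot come from a better constant in the hypercontractive inequality. The paper's proof avoids this spectral dead end entirely.

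What the paper actually does, and what is missing from your proposal, is the iterative restriction mechanism that converts the non-monotone forbidden-$(t-1)$-intersection hypothesis into the monotone cross-$1$-intersecting hypothesis. At each of $t-1$ rounds it passes to global restrictions of the two families, uses the sharp level-$1$ consequence of Keevash--Lifshitz hypercontractivity (via Lemma~\ref{lem:level-1} and Theorem~\ref{theorem:level-d for global functions_intro}, through a Chebyshev argument on the quantity $\mathbb{E}[1\{A_{i\to j}\}]-\mathbb{E}[1_A]$) to find a single common restriction $i\to j$ that loses at most half the measure of each family, and observes that this drops the forbidden intersection parameter by one. After $t-1$ rounds one has two global, cross-$1$-intersecting families; the contradiction then comes not from eigenvalues of a kernel but from the spreadness lemma (Proposition~\ref{lem:cross-intersecting-global}), obtained by embedding $S_n$ into $\{0,1\}^{n^2}$ and applying Tao's iterated refinement inequality to exhibit a set $W$ covering one family with $W^c$ covering the other. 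Your Step~1 (spread/captured decomposition) is also not what the paper does --- there is no such decomposition; instead the paper proves directly (Proposition~\ref{lem:non-densitybump}) that either the two families share a density bump in some dictatorship or their product of measures is exponentially small, and then a double induction on $(t,m)$ with the slack factor $4^m$ (Proposition~\ref{Prop:main-forbidden}, Lemma~\ref{lemma:all-meassure-in-dictator}) upgrades this bump into full containment in a $t$-umvirate. In short: replace the operator-eigenvalue step by the iterative ``drop $t-1$ to $0$'' restriction argument plus the spreadness endgame; without that replacement the argument does not close in the range $t \sim n/\log n$.
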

The range of $t$ in the theorem is essentially optimal, as for any $\epsilon>0$, the assertion of the theorem fails for all $t \geq (1+\epsilon)\frac{n}{\log_2 n}$ and $n \geq n_0(\epsilon)$. Indeed, for any even $n$, the families 
\begin{align*}
&F=\left\{\sigma \in S_n:\sigma \left( \left[\frac{n}{2}\right]\right)=\left[\frac{n}{2} \right], \sigma \left([n] \setminus \left[\frac{n}{2}\right]\right)=[n] \setminus \left[\frac{n}{2}\right]\right\}, \\
&G=\left\{\tau \in S_n:\tau \left(\left[\frac{n}{2}\right] \right)=[n] \setminus \left[\frac{n}{2}\right], \tau \left([n] \setminus \left[\frac{n}{2}\right]\right)=\left[\frac{n}{2}\right]\right\}
\end{align*}
are clearly cross $(t-1)$-intersection free. We have $|F||G|= (\frac{n}{2})!^4$, which for any $\epsilon>0$ is larger than $((n-t)!)^2$ for all $t \geq (1+\epsilon)\frac{n}{\log_2 n}$, provided $n \geq n_0(\epsilon)$.

\medskip \noindent \textbf{Stability version.} Our proof of Theorem~\ref{thm:main} implies the following stability version.
\begin{thm}\label{thm:stability}
  There exists $c>0$ such that the following holds. Let $n,t \in \mathbb{N}$, such that $t\leq c \frac{n}{\log{n}}$. Let $F,G \subset S_n$ such that for any $\sigma \in F, \tau \in G$, $|\{i \in [n]:\sigma(i)=\tau(i)\}|\neq t-1$. If 
    \[
    |F||G| >  \left(1-\frac{1}{(100t)^t} \right)\cdot (n-t)!^2,
    \]
    then $F,G \subset \{\sigma \in S_n:\sigma(i_1)=j_1,\ldots,\sigma(i_t)=j_t\}$, for  $i_1,\ldots,i_t,j_1,\ldots,j_t \in [n]$. 
\end{thm}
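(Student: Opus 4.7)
The plan is to extract the stability bound by tracking quantitative losses through the proof of Theorem~\ref{thm:main}. That proof, combining hypercontractivity of global functions with spread approximation, should produce a structural description of any cross $(t-1)$-intersection free pair $(F,G)$ with $|F||G|$ close to the maximum: there are $t$-cosets $K^F_1,\ldots,K^F_r,K^G_1,\ldots,K^G_s$ of $S_n$ (equivalently, $t$-stars) such that $F\subset \bigcup_i K^F_i\cup E_F$ and $G\subset \bigcup_j K^G_j \cup E_G$, where the residuals $E_F,E_G$ are much smaller than $(100t)^{-t}(n-t)!$. The spread approximation step produces polynomial or exponentially decaying bounds on the residuals in terms of the deficit, so the hypothesis $|F||G|>(1-(100t)^{-t})((n-t)!)^2$ suffices to make the residuals negligible for our purposes.

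The first stability step is to show $r=s=1$. The proof of Theorem~\ref{thm:main} already shows the strict inequality $|F||G|<((n-t)!)^2$ when $r\geq 2$ or $s\geq 2$; I would refine this into a quantitative gap. A typical argument shows that when two distinct cosets $K^F_1,K^F_2$ both contribute to $F$, the cross-intersection-free constraint against each $K^G_j$ forces the union to miss at least a constant fraction of one coset, giving a multiplicative loss far larger than the tolerated $(100t)^{-t}$.

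Given $r=s=1$, call the two surviving cosets $K^F$ and $K^G$. The second step is to show $K^F=K^G$. If not, pick any $\sigma\in K^F$ and count $\tau\in K^G$ agreeing with $\sigma$ on exactly $t-1$ coordinates; for two generic distinct $t$-cosets this count is of order $(n-t)!/\mathrm{poly}(t)$. Hence if $K^F\neq K^G$ then the cross-intersection-free hypothesis forces either $F$ or $G$ to miss a $1/\mathrm{poly}(t)$ fraction of its coset, again swamping $(100t)^{-t}$. Ruling out all such configurations leaves $K^F=K^G=:K$. Finally, to upgrade the conclusion from ``close to $K$'' to ``contained in $K$,'' I would argue $E_F=E_G=\emptyset$: any $\sigma\in E_F\setminus K$ has of order $(n-t)!/t$ partners $\tau\in K$ agreeing with it on exactly $t-1$ coordinates, and this far exceeds the (very small) number of missing permutations $|K\setminus G|$, so pigeonhole produces a forbidden pair.

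The step I expect to be the main obstacle is the first one: converting the non-quantitative conclusion ``$r\geq 2$ implies $|F||G|<((n-t)!)^2$'' from the proof of Theorem~\ref{thm:main} into a sharp gap of at least $(100t)^{-t}$, with enough slack to absorb the residuals $E_F,E_G$ arising from the spread approximation. Matching both constants should require the sharp form of the global hypercontractivity inequality used in the proof of Theorem~\ref{thm:main}, together with careful book-keeping of densities inside each candidate coset $K_i^F$.
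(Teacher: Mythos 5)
Your high-level instinct---track quantitative losses through the extremal proof, reach ``almost contained in a $t$-umvirate,'' then upgrade to full containment by a counting/pigeonhole argument---is in the right spirit, but the concrete plan diverges from the paper in two ways that matter.

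First, the paper's proof of Theorem~\ref{thm:main} does \emph{not} pass through a Kupavskii--Zakharov-style spread approximation $F\subset\bigcup_i K^F_i\cup E_F$. Spreadness is invoked only once, in Proposition~\ref{lem:cross-intersecting-global}, to rule out cross $1$-intersecting global families; it never produces a union of $t$-cosets plus residual. Instead, Proposition~\ref{Prop:main-forbidden} is proved by a double induction whose core lemma (Lemma~\ref{lemma:all-meassure-in-dictator}) directly forces both families to have all but an $O(1/n)$ fraction inside a \emph{common} dictatorship, and iterating $t$ times lands both $F$ and $G$ with at least $1-\tfrac{1}{1000}$ of their mass inside a \emph{single} common $t$-umvirate $U$. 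So your steps ``$r=s=1$'' and ``$K^F=K^G$'' do not arise: the decomposition you posit is not available, and the common coset comes for free from the construction. The stability version is obtained, as the paper indicates, by re-reading the final dichotomy (Cases~1 and~2 in the proof of Step~2, $m=0$) and observing that whenever $F\not\subset U$ or $G\not\subset U$, the derived upper bound on $|F||G|$ is at most $(n-t)!^2\bigl(1+2(8r_1^2)^{-r_1}\bigr)\bigl(1-\tfrac{1}{2}(2r_1)^{-r_1}\bigr)$ (and symmetrically with $r_2$), which is strictly below $\bigl(1-(100t)^{-t}\bigr)(n-t)!^2$.

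Second, the key estimate in your final pigeonhole step is wrong, and the error is exactly where the constant $100$ earns its keep. You claim any $\sigma$ outside $K$ has ``of order $(n-t)!/t$'' partners $\tau\in K$ agreeing with it on exactly $t-1$ coordinates. In fact this count depends heavily on how $\sigma$'s own $t$-coset relates to $K$: if it disagrees with $K$ on $r$ of the $t$ restricted positions (with $1\le r\le t$), then one needs $\tau$ to match $\sigma$ on $r-1$ further coordinates, and by Lemma~\ref{lem:cross-intersects-one} the number of ``bad'' $\tau\in U$ is at least $(n-t)!\big/\bigl(4\cdot 2^{r-1}(r-1)!\bigr)\gtrsim (n-t)!/(2r)^r$. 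For $r$ near $t$ this is of order $(n-t)!/(2t)^t$, which is \emph{much} smaller than $(n-t)!/t$. The whole point of the hypothesis being tuned to $(100t)^{-t}$ rather than, say, $1/\mathrm{poly}(t)$, is that the deficit must be compared against a quantity that itself decays like $(2t)^{-t}$; the slack between $100$ and $2$ is what makes Cases~1 and~2 close. With your estimate of $(n-t)!/t$, one would (incorrectly) expect a much stronger stability statement of the form $|F||G|>\bigl(1-\tfrac{1}{\mathrm{poly}(t)}\bigr)(n-t)!^2$ to suffice; as noted in the discussion after Theorem~\ref{thm:stability}, that is false, and the $(ct)^{-t}$ scale is genuinely tight.

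So while the broad outline (structure theorem $\Rightarrow$ almost-containment $\Rightarrow$ counting forbidden pairs) is the right skeleton, the proposal as written rests on a decomposition the paper does not produce, skips directly to a common coset that in fact needs the iterative core lemma to manufacture, and relies on a partner count that is off by a factor exponential in $t$ in the critical regime. Each of these would need to be repaired; the paper's own route via Cases~1--2 of Step~2 already contains the necessary quantitative content.
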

This stability version seems significantly weaker than the stability version in the cross $t$-intersection problem~\cite{keller2024t}, that allows deducing the same assertion once $|F||G| > \frac{3}{4} \cdot (n-t)!^2$. However, it is tight up to replacing the term $100$ in the denominator by a different constant. Indeed, let $F',G'$ be obtained from $F=G=\{\sigma \in S_n: \sigma(i)=i, \forall i \in [t]\}$ by adding to $F$ a single permutation $\sigma'$ such that $\sigma'(i) \neq i$ for all $i \in [t]$ and removing from $G$ all permutations $\tau$ that agree with $\sigma'$ on exactly $t-1$ elements. $F',G'$ are clearly cross $(t-1)$-intersection free, are not contained in the same family of the form $\{\sigma \in S_n:\sigma(i_1)=j_1,\ldots,\sigma(i_t)=j_t\}$,  and a direct calculation shows that $|F'||G'| \geq (1-\frac{1}{(ct)^t})\cdot (n-t)!^2$ for some $c>0$. (Indeed, the probability that $\tau \in G$ intersects $\sigma'$ in exactly $t-1$ elements is roughly the probability that a permutation $\tau \in S_n$ has exactly $t-1$ fixed points, which is of order $\frac{1}{O(t)^t}$). This stark difference between the stability versions emphasizes the difference between the $t$-intersection problem and the forbidden $(t-1)$-intersection problem, even in the range where the extremal families in both problems are the same.

\medskip \noindent \textbf{The single-family case.} A special case of Theorem~\ref{thm:main} proves the conjecture of Ellis~\cite{Ell14} itself, for $t \leq c \frac{n}{\log n}$. Specifically, we obtain the following result, which includes an essentially optimal stability version.
\begin{cor}\label{cor:main}
    There exists $c>0$ such that the following holds. Let $n,t \in \mathbb{N}$, such that $t\leq c \frac{n}{\log{n}}$. Let $F \subset S_n$ such that for any $\sigma, \tau \in F$, $|\{i \in [n]:\sigma(i)=\tau(i)\}|\neq t-1$. Then $|F| \leq (n-t)!$. 
    
    Moreover, if 
    \[
    |F| >  \left(1-\frac{1}{2(100t)^t} \right)\cdot (n-t)!,
    \]
    then $F \subset \{\sigma \in S_n:\sigma(i_1)=j_1,\ldots,\sigma(i_t)=j_t\}$, for some $i_1,\ldots,i_t,j_1,\ldots,j_t \in [n]$.
\end{cor}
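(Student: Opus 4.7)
The plan is to deduce the corollary directly from Theorems \ref{thm:main} and \ref{thm:stability} by taking $G=F$. The only thing worth checking is that the pair $(F,F)$ legitimately satisfies the cross $(t-1)$-intersection free hypothesis: for $\sigma=\tau\in F$ we have $|\{i\in[n]:\sigma(i)=\tau(i)\}|=n$, and since $t\le c\frac{n}{\log n}<n+1$ for $n$ sufficiently large (which we may assume by taking $c$ small), we have $n\neq t-1$, so the equality-case poses no problem. For distinct $\sigma,\tau\in F$, the hypothesis of the corollary gives exactly what we need.

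For the upper bound, applying Theorem \ref{thm:main} to the pair $(F,F)$ yields $|F|^2\le (n-t)!^2$, hence $|F|\le (n-t)!$.

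For the stability part, suppose $|F|>\bigl(1-\tfrac{1}{2(100t)^t}\bigr)(n-t)!$. Squaring and using the elementary inequality $(1-x)^2\ge 1-2x$ for $x\in[0,1]$ with $x=\tfrac{1}{2(100t)^t}$, we obtain
\[
|F|\cdot|F| \;>\; \left(1-\tfrac{1}{2(100t)^t}\right)^{2}(n-t)!^2 \;\ge\; \left(1-\tfrac{1}{(100t)^t}\right)(n-t)!^2.
\]
Thus the pair $(F,F)$ satisfies the hypothesis of Theorem \ref{thm:stability}, which gives indices $i_1,\ldots,i_t,j_1,\ldots,j_t\in[n]$ with $F\subset\{\sigma\in S_n:\sigma(i_1)=j_1,\ldots,\sigma(i_t)=j_t\}$, as required.

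There is no real obstacle here beyond verifying the $(1-x)^2\ge 1-2x$ bookkeeping and the triviality that the diagonal pair $(\sigma,\sigma)$ does not accidentally violate the forbidden-intersection condition. All the substance is carried by Theorems \ref{thm:main} and \ref{thm:stability}.
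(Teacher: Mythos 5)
Your proof is correct and matches the paper's intended argument: the paper explicitly presents Corollary~\ref{cor:main} as the special case $F=G$ of Theorems~\ref{thm:main} and~\ref{thm:stability}, and your check that the diagonal pair poses no issue and the bookkeeping via $(1-x)^2\ge 1-2x$ are both sound.
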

As was mentioned above, the best previous published result on Ellis' conjecture, by Kupavskii and Zakharov~\cite{KZ22}, proves it for $t \leq \tilde{O}(n^{1/3})$.

\subsection{Our techniques} Besides classical combinatorial techniques, our proof makes crucial use of two recently proposed techniques. The first is \emph{hypercontractivity for global functions}, developed by Keevash, Lifshitz, Long and Minzer~\cite{KLLM21}, and more specifically, its sharp variant for functions over the symmetric group, developed by Keevash and Lifshitz~\cite{keevash2023sharp}. The second is the \emph{spreadness} technique that was introduced in the breakthrough work of Alweiss, Lovett, Wu and Zhang~\cite{ALWZ21} on the Sunflower conjecture, and was enhanced and developed by Kupavskii and Zakharov~\cite{KZ22} into the aforementioned \emph{spread approximation method}. Interestingly, while hypercontractivity for global functions and spreadness seem complementary, our proof uses both of them together -- a spreadness argument is applied to families constructed using a hypercontractivity argument.\footnote{We note that both the spread approximation method and hypercontractivity for global functions were used in the recent work of Kupavskii and Noskov~\cite{KupavskiiN24} on the Duke-Erd\H{o}s problem. However, they were used separately, to solve different cases of the problem.}  

\subsection{Proof overview} 
\label{sec:sub:Proof-overview}
For presenting the stages of the proof, we need a few more definitions. A \emph{$k$-restriction} of a family $F \subset S_n$ is $F_{i_1 \to j_1,\ldots,i_k \to j_k}= \{\sigma \in F: \sigma(i_1)=j_1 ,\ldots,\sigma(i_k)=j_k\}$. A restriction is called \emph{global} if no further restrictions increase the relative density of the family significantly. (A formal definition is given in Section~\ref{sec:Preliminaries}). For any $i,j \in [n]$, the family $(S_n)_{i \to j} = \{\sigma \in S_n: \sigma(i)=j\}$ is called a \emph{dictatorship}. For any $i_1,\ldots,i_t,j_1,\ldots,j_t \in [n]$, the family $(S_n)_{i_1 \to j_1,\ldots,i_t \to j_t} = \{\sigma \in S_n: \sigma(i_1)=j_1,\ldots,\sigma(i_t)=j_t\}$ is called a \emph{$t$-umvirate}. 

Our proof consists of two main steps:
\begin{enumerate}
    \item \emph{Cross $(t-1)$-intersection free families have a density bump inside a dictatorship.} We show that there exists $c>0$ such that if $t<c\frac{n}{\log n}$ and $F,G \subset S_n$ are cross $(t-1)$-intersection free, then there exist $i,j \in [n]$ such that the relative densities of both $F$ and $G$ inside the dictatorship $(S_n)_{i \to j}$ are significantly larger than the densities of $F$ and $G$ in $S_n$.

    \item \emph{Upgrading density bump inside a dictatorship into containment in a $t$-umvirate.} We show that the first step can be applied sequentially to deduce that $F,G$ are essentially contained in a $t$-umvirate $(S_n)_{i_1\to j_1,\ldots,i_t \to j_t}$ for some $i_1,\ldots,i_t$ and $j_1,\ldots,j_t$. 
\end{enumerate}

The second step is a technically involved inductive argument which requires incorporating a stability statement as part of the proof.

The first step is a bit shorter, but is the conceptually harder one. We assume to the contrary that $F,G$ do not have a density bump inside a dictatorship, and reach a contradiction. This is achieved in two sub-steps. 
\begin{enumerate}[label=(\alph*)]
    \item \emph{Using hypercontractivity to find global cross $1$-intersecting subfamilies of $F,G$.} We find sub-families $F' \subset F$, $G' \subset G$ that are global and cross $0$-intersection free (a.k.a.~cross $1$-intersecting). To this end, we first construct global restrictions $F',G'$ of $F,G$. Then, we use sharp hypercontractivity for global functions in $S_n$~\cite{keevash2023sharp} to show that there exist $i_1,j_1 \in [n]$ such that the relative densities of the restrictions $F'_{i_1 \to j_1},G'_{i_1 \to j_1}$ are \emph{not much smaller} than the densities of $F',G'$. These restrictions are clearly cross $(t-2)$-intersection free and are `not too small'. By repeating this argument (i.e., taking global restrictions of the two families and using hypercontractivity to find a common restriction that does not reduce the measure too much) iteratively $t-1$ times, we arrive at subfamilies of $F,G$ that are global and cross $0$-intersection free -- that is, cross $1$-intersecting.

    \item \emph{Using spreadness to obtain a contradiction.} We use the spreadness technique to show that two global families of permutations cannot be cross $1$-intersecting, thus obtaining a contradiction.
\end{enumerate}

\medskip \noindent \textbf{Related work.} The general strategy of our proof is similar to the proof of~\cite[Theorem~1]{keller2024t} which asserts that there exists $c>0$ such that for all $t<cn$, any two cross $t$-intersecting families $F,G \subset S_n$ satisfy $|F||G|\leq ((n-t)!)^2$. The conceptually easier second step of the proof is also generally similar to the corresponding step in~\cite{keller2024t} -- it is just technically harder. 

However, the conceptually harder proofs of the first step differ essentially completely. In~\cite{keller2024t}, the proof begins with a reduction to the setting of functions over the hypercube $\{0,1\}^{n^2}$ endowed with a biased measure, and the second sub-step of the proof utilizes a sharp threshold result obtained using sharp hypercontractive inequalities for functions over the hypercube~\cite{Omri}, the FKG inequality~\cite{FKG} and the biased version of the Ahlswede-Khachatrian complete intersection theorem~\cite{Filmus17} to obtain a contradiction. The entire proof works with $t$-intersecting families, and as a result, the first sub-step is relatively easy and does not use hypercontractivity. In our proof, the main goal of the first sub-step is to move from the non-monotone setting of cross $(t-1)$-intersection free families to the monotone setting of cross $1$-intersecting families. This is obtained by an iterative process in which we do not reduce the problem to the hypercube setting and instead, we make use of the special structure of $S_n$ by applying the recent result of Keevash and Lifshitz~\cite{keevash2023sharp} on sharp hypercontractivity for global functions over $S_n$. In the second sub-step, a central obstacle we have to overcome is that due to the $(t-1)$-step iterative process, the families we obtain are relatively small, unlike in~\cite{keller2024t}. As a result, hypercontractivity does not yield a sufficiently strong bound, and we have to use spreadness instead.\footnote{We note that it is possible to use a hypercontractivity argument in the second sub-step instead of the spreadness argument we use (see Remark at the end of Section~\ref{sec:spreadness}). However, this yields the assertion of Theorem~\ref{thm:main} only in the inferior range $t<c\frac{n}{(\log n)^2}$, for some $c>0$.}  

\subsection{Open problems} A main problem which remains for further research is, what is the range in which the conjecture of Ellis holds.
\begin{question}[Ellis and Lifshitz~\cite{DN22}]
    Find the maximum value $t(n)$ for which the maximum size of a $(t-1)$-intersection free family in $S_n$ is $(n-t)!$.  
\end{question}
For $t$-intersecting families in $S_n$, Ellis, Friedgut and Pilpel~\cite{EFP11} conjectured that the maximum size is $(n-t)!$ for all $t<\frac{n}{2}$, and this conjecture was proved by Kupavskii~\cite{Kup24a} for all sufficiently large $n$. It might be tempting to conjecture that the same holds for $(t-1)$-intersection free families in $S_n$. However, this is not the case. For any even integer $n$, the family 
\[
F=\{\sigma \in S_n: \forall k \leq \frac{n}{2} \;\exists \ell \leq \frac{n}{2}, \sigma(\{2k-1,2k\})=\{2\ell-1,2\ell\}\}
\]
consisting of all permutations that `keep all pairs $\{2k-1,2k\}$ unseparated' is clearly $(t-1)$-intersection free for every even $t$. We have $|F|=(\frac{n}{2})! \cdot 2^{n/2}$, which for every $\epsilon>0$, is larger than $(n-t)!$ for all $t>\frac{n}{2}(1-\frac{1-\epsilon}{\log_2(n/2)})$, provided $n\geq n_0(\epsilon)$. 

Another example is the family $G \subset S_n$ of all permutations composed of $\frac{n}{2}$ cycles of length $2$. Like $F$, the family $G$ is $(t-1)$-intersection free for every even $t$. We have $|G|=(n-1)(n-3)\cdots=\frac{n!}{(n/2)! \cdot 2^{n/2}} =\frac{|F|}{\Theta(\sqrt{n})}$, and thus, for every $\epsilon>0$, $|G|>(n-t)!$ for all $t>\frac{n}{2}(1-\frac{1-\epsilon}{\log_2(n/2)})$, provided $n\geq n_0(\epsilon)$. It will be interesting to find examples of $(t-1)$-intersection free families of permutations of size $>(n-t)!$ for some $t \leq \frac{n}{2}(1-\omega(\frac{1}{\log n}))$, if at all such examples exist.


\section{Preliminaries}
\label{sec:Preliminaries}

In this section we present definitions, notations and previous results that will be used in the sequel.

\subsection{Definitions and notations}

For convenience, we denote the families of permutations we consider by $A,B \subset S_n$. The indicator function of $A$ is $1_A:S_n \to \{0,1\}$, defined by $1_A(\sigma)=1$ if $\sigma \in A$ and $1_A(\sigma)=0$ otherwise. In places where the notation $1_A$ becomes too cumbersome due to the use of many subscripts and superscripts, we use $1\{A\}$ instead. For two permutations $\sigma,\tau \in S_n$, we use the notation $\sigma \cap \tau = \{i \in [n]:\sigma(i)=\tau(i)\}$.

Throughout the proofs, we use several universal constants. We denote them by $c,c_0,c_1,\ldots$, and we state dependencies between them when those dependencies are important. We didn't try to get optimal constant factors in our results. In several places in the sequel, we use the uniform measure on $S_n$ and on other spaces. Where the ambient space is clear from the context, we denote the uniform measure on it by $\mu$.

\medskip \noindent \textbf{The biased measure on the hypercube.} We also consider the biased measure $\mu_p$ on the hypercube $\{0,1\}^N$, defined by $\mu_p(x)=p^{\sum x_i}(1-p)^{N-\sum x_i}$. A subset $W \subset [N]$ is called \emph{$p$-random} if each $i \in [N]$ is included in $W$ with probability $p$, independently of the other elements. The family $2^{[N]}$ of all subsets of $[N]$ naturally corresponds to $\{0,1\}^N$. Under this correspondence, $p$-random subsets of $N$ correspond to elements of $\{0,1\}^N$  distributed according to $\mu_p$.

\subsection{Spreadness} For $r>1$, a probability measure $\mu_0$ on  $2^{[n]}$ is called \emph{$r$-spread} if
\[
\mu(\{F : X \subseteq F\}) \leq r^{-|X|}
\]
for any $X \subset [n]$. A family $A \subseteq 2^{[n]}$ is called \emph{$r$-spread} if the probability measure defined by its normalized indicator is $r$-spread.

The following result, called the \emph{iterated refinement inequality}, is a sharpening due to Tao~\cite{TaoSunflower} of a result proved by Alweiss et al.~\cite{alweiss2020improved} in their breakthrough work on the sunflower conjecture. The exact formulation we use is taken from the work of Kupavskii and Zakharov~\cite[Theorem~4]{KZ22} who attribute the result to Tao~\cite{TaoSunflower}.
\begin{thm}[\cite{TaoSunflower}, Corollary~7]\label{thm:spread-families}
    Let \( n, r \geq 1 \), let \( \mu_0: 2^{[n]} \rightarrow [0,1] \) be an \( r \)-spread measure, and let \( W \) be an \( (m\delta) \)-random subset of \( [n] \). Then
\[
\Pr\left[\exists F \in \operatorname{supp}(\mu_0) : F \subset W \right] \geq 1 - \left(\frac{5 }{\log_2(r\delta)} \right)^m \cdot |\mu_0|,
\]
where $|\mu_0| := \sum_{S \subset 2^{[n]}} |S| \mu_0(S)$.
\end{thm}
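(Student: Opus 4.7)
The plan is to prove Theorem~\ref{thm:spread-families} by induction on $m$, peeling off one ``layer'' of randomness at a time. First I would couple the $(m\delta)$-random set $W$ to the union $W_1\cup\cdots\cup W_m$ of $m$ independent $\delta$-random subsets of $[n]$: since each element of $[n]$ lies in this union with probability $1-(1-\delta)^m\leq m\delta$, it suffices to lower bound $\Pr[\exists F\in\mathrm{supp}(\mu_0):\, F\subseteq W_1\cup\cdots\cup W_m]$. Then I would maintain, across rounds, a growing ``core'' $X_i\subseteq W_1\cup\cdots\cup W_i$ together with a residual $r$-spread probability measure $\mu_i$ on $2^{[n]\setminus X_i}$ obtained by conditioning $\mu_0$ on $\{F:X_i\subseteq F\}$ and then removing $X_i$ from each set. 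The game ends successfully as soon as $\mu_i$ assigns positive mass to the empty set, equivalently as soon as $X_i$ itself already contains some $F\in\mathrm{supp}(\mu_0)$.

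The engine driving the induction is the one-step refinement lemma of Alweiss--Lovett--Wu--Zhang~\cite{alweiss2020improved}, in the sharpened form due to Tao~\cite{TaoSunflower}: if $\mu$ is an $r$-spread measure on $2^{[n]}$ with $r\delta>2$ and $W$ is $\delta$-random, then, writing $\epsilon:=5/\log_2(r\delta)$, with probability at least $1-\epsilon\cdot\mathbb{E}_\mu|F|$ either (a) some $F\in\mathrm{supp}(\mu)$ already satisfies $F\subseteq W$, or (b) there is a nonempty $X\subseteq W$ for which the conditional measure $\mu(\,\cdot\mid F\supseteq X)$, reinterpreted on $2^{[n]\setminus X}$, is again $r$-spread. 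The proof of this one-step lemma is an encoding / entropy compression argument: on the failure event one greedily constructs a ``witness sequence'' of small sets whose accumulated $\mu$-mass would violate the $r$-spread bound, and then counts the number of such sequences to bound the failure probability.

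Iterating this one-step lemma, with $\mu_{i-1}$ and $W_i$ playing the roles of $\mu$ and $W$ at round $i$, and taking a union bound over the $m$ rounds, yields a total failure probability of at most $\epsilon^m\cdot\mathbb{E}_{\mu_0}|F|=\epsilon^m\cdot|\mu_0|$. Here each round contributes a multiplicative factor of $\epsilon$, while the single surviving factor of $|\mu_0|$ arises because the expected sizes $\mathbb{E}_{\mu_i}|F|$ telescope through the conditionings back to $\mathbb{E}_{\mu_0}|F|$. Plugging this into the coupling above gives the stated bound.

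The hard part will be the one-step refinement lemma itself --- the encoding argument that converts the quantitative $r$-spreadness assumption into an explicit upper bound on the probability that a $\delta$-random set neither contains a member of $\mathrm{supp}(\mu)$ nor produces a popular core $X$ that preserves $r$-spreadness. Once that key lemma is in hand, the layered coupling of $W$ into independent $\delta$-random pieces, the maintenance of the residual measures $\mu_i$, and the union bound across the $m$ rounds are all routine bookkeeping.
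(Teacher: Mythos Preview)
The paper does not prove this theorem: it is quoted from \cite{TaoSunflower} (Tao's sharpening of the Alweiss--Lovett--Wu--Zhang refinement inequality) and used as a black box, so there is no in-paper proof to compare against. Your overall architecture --- couple the $(m\delta)$-random $W$ with a union of $m$ independent $\delta$-random sets and iterate a one-step refinement lemma --- is indeed the standard route and matches how the cited result is established.

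There is, however, a genuine gap in your iteration. You claim that ``taking a union bound over the $m$ rounds yields a total failure probability of at most $\epsilon^m\cdot|\mu_0|$,'' but a union bound over $m$ events produces a \emph{sum} of their probabilities, never a product; no union bound can manufacture the factor $\epsilon^m$. The ``telescoping'' you invoke is likewise not the mechanism: the quantities $\mathbb{E}_{\mu_i}|F|$ do not telescope back to $|\mu_0|$. The correct one-step lemma (Tao's key proposition) is an \emph{expectation} statement, not a per-round success-probability statement: given an $r$-spread $\mu$ and a $\delta$-random $W$, one constructs a new $r$-spread measure $\mu'$ supported on sets of the form $F\setminus W$ with $\mathbb{E}_{\mu'}|F'|\le \epsilon\,\mathbb{E}_{\mu}|F|$. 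Iterating $m$ times gives $\mathbb{E}_{\mu_m}|F|\le \epsilon^m|\mu_0|$, and a \emph{single} Markov inequality at the very end bounds the probability that the residual is nonempty by $\epsilon^m|\mu_0|$. Your ``fix a core $X\subseteq W$ and pass to the conditional measure $\mu(\,\cdot\mid F\supseteq X)$'' formulation does not match this and, as written, carries no shrinkage guarantee (conditioning on $F\supseteq X$ can \emph{increase} $\mathbb{E}|F|$), so the induction you describe would not close without replacing it by the expectation-shrinking version above.
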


\subsection{Restrictions and globalness} 

Recall that the \emph{dictator} $(S_n)_{i\to j}$ is the set of permutations that send $i$ to $j$. For $d$-tuples $I=(i_1,\ldots,i_d)$ and $J=(j_1,\ldots,j_d)$, the \emph{$d$-umvirate} $(S_n)_{I \to J}$ is the set of permutations that send $i_\ell$ to $j_{\ell}$ for all $\ell=1,\ldots,d$.  The restriction of a function $f$ to the $d$-umvirate $(S_n)_{I \to J}$ is denoted by $f_{I\to J}$ and is called a \emph{$d$-restriction}.

\medskip \noindent \textbf{Sub-permutation-spaces.} Throughout the proof, we shall treat extensively iterative processes of restrictions of functions over $S_n$. For this sake, we introduce the following notion:  
\begin{defn}
A \emph{sub-permutation-space} is the space obtained from $S_n$ by a restriction of some of the coordinates. We denote such a space by $P_{n,k}$, where the number of restricted coordinates is $k$, and it will be convenient for us to not specify which coordinates were restricted.

Two sub-permutation-spaces are called \emph{non-intersecting} if the restrictions that created them send the same coordinates to different coordinates (e.g., are of the form $i \to j_1$ and $i \to j_2$, where $j_1 \neq j_2$), and thus, cannot contain intersections. 
\end{defn}
We note that in the context of the degree decomposition discussed below,  
$P_{n,k}$ can be thought of as $S_{n-k}$. 

\medskip \noindent \textbf{Globalness.} For $\gamma>0$, a function $f:S_n \to \mathbb{C}$ (or $f:P_{n,k} \to \mathbb{C}$)  is  \emph{$\gamma$-global} if 
\[
\|f_{I\to J}\|_2 \le \gamma^{|I|}\|f\|_2
\]
for all restrictions $f_{I\to J}$. A family $A \subset S_n$ (or $A \subset P_{n,k}$) is $\gamma$-global if its indicator function $1_A$ is $\gamma$-global. Intuitively, this notion means that no fixing of coordinates increases the relative density of $f$ significantly.

For any family $A \subset S_n$ and any $\gamma>0$, we can construct a $\gamma$-global restriction of $A$ by choosing tuples $I,J$ of elements in $[n]$ such that $\frac{|A_{I \to J}|}{\gamma^{|I|}|(S_n)_{I \to J}|}$ is maximal over all the choices of $I,J \subset [n]$ with $|I|=|J|$ (and if there are several maxima, we pick one of them arbitrarily). It is easy to see that $A_{I \to J}$ is indeed $g$-global and satisfies  
\begin{equation}\label{Eq:g-global-forbidden2}
    \frac{|A_{I \to J}|}{|(S_n)_{I \to J}|} \geq \gamma^{|I|}\cdot \frac{|A|}{|S_n|},
\end{equation} 
a fact that we will use several times. A $\gamma$-global restriction of $A \subset P_{n,k}$ can be constructed similarly.

\subsection{Hypercontractivity for global functions over $S_n$}

A function $f\colon S_n\to \mathbb{C}$ is a $d$-junta if $\exists i_1,\dots,i_d$ such that $f(\sigma)$ depends only on  $\sigma(i_1),\dots,\sigma(i_d)$.
The \emph{level-$d$} space $V_{\leq d}\subseteq L_2(S_n)$ is the linear space generated by all $d$-juntas. For every $d \leq n$, we set $V_{=d}:= V_{\leq d}\cap (V_{\leq d-1})^\perp$,
and denote by $f^{=d}$ the projection of $f$ into $V_{=d}$. 

The \emph{degree decomposition} of a function $f\colon S_n\to \mathbb{C}$ is $f=\sum_d f^{=d}$. This decomposition, introduced by Ellis, Friedgut and Pilpel~\cite{EFP11}, is an analogue of the decomposition into levels of the Fourier expansion of functions over the hypercube $\{0,1\}^n$. We shall use two results pertaining to the degree decomposition.

The first is a lemma obtained by Keevash, Lifshitz and Minzer~\cite{keevash2024largest}, which gives a precise description of the first level in the decomposition. 
\begin{lem}[{\cite[ Lemmas 3.1, 3.2]{keevash2024largest}}]\label{lem:level-1} 
    For any $f:S_n\rightarrow \mathbb{R}$, we have 
    \[
    f^{=1} = \sum_{i,j}a_{ij}x_{i  \rightarrow j},
    \] 
    where $x_{i \to j} = 1\{\sigma \in S_n:\sigma(i)=j\}$ and   
    $a_{ij}=(1-\frac{1}{n})(\mathbb{E}[f_{i\rightarrow j}]-\mathbb{E}[f])$.
    Moreover, 
    \[
    \|f^{=1}\|_2^2 = \frac{1}{n-1} \sum a_{ij}^2.
    \]
\end{lem}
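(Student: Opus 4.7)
The plan is to define $g := \sum_{i,j} a_{ij}\, x_{i \to j}$ and verify that $g = f^{=1}$ by checking the two defining properties: $g \in V_{=1}$ and $f - g \perp V_{=1}$. Since $V_{\leq 1}$ is spanned by the indicators $x_{i \to j}$ together with the constants (any $1$-junta depending on $\sigma(i)$ is a linear combination of $\{x_{i \to j}\}_{j}$), membership $g \in V_{\leq 1}$ is immediate. For $\mathbb{E}[g]=0$, observe that for each fixed $i$, $\sum_j \mathbb{E}[f_{i\to j}] = n\,\mathbb{E}[f]$, so $\sum_j a_{ij}=0$ and thus $\mathbb{E}[g] = \tfrac{1}{n}\sum_{i,j} a_{ij} = 0$; hence $g \in V_{=1}$.

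For the orthogonality of $f-g$ to $V_{=1}$, it suffices to verify $\langle f-g,\, x_{i\to j} - \tfrac1n\rangle = 0$ for every $i,j$, since the functions $x_{i\to j} - \tfrac1n$ span $V_{=1}$. Using $\langle f, x_{i\to j}\rangle = \tfrac{1}{n}\mathbb{E}[f_{i\to j}]$, the task reduces to showing
\[
\langle g, x_{i\to j}\rangle \;=\; \frac{1}{n}\bigl(\mathbb{E}[f_{i\to j}] - \mathbb{E}[f]\bigr).
\]
To compute the left-hand side I would expand using the identity
\[
\mathbb{E}[x_{i'\to j'} x_{i\to j}] \;=\; \begin{cases} \tfrac1n & (i',j')=(i,j) \\ 0 & i'=i,\ j'\neq j \ \text{or}\ i'\neq i,\ j'=j \\ \tfrac{1}{n(n-1)} & i'\neq i,\ j'\neq j \end{cases}
\]
which splits $\langle g, x_{i\to j}\rangle$ into a diagonal term $\tfrac{a_{ij}}{n}$ and an off-diagonal term $\tfrac{1}{n(n-1)}\sum_{i'\neq i,\,j'\neq j} a_{i'j'}$. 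Writing $\alpha_{ij} := \mathbb{E}[f_{i\to j}] - \mathbb{E}[f]$, so that $a_{ij} = \tfrac{n-1}{n}\alpha_{ij}$, the crucial inclusion--exclusion step uses the doubly-stochastic relations $\sum_{j'}\alpha_{ij'} = \sum_{i'}\alpha_{i'j} = 0$ to collapse $\sum_{i'\neq i,\,j'\neq j}\alpha_{i'j'} = \alpha_{ij}$, and therefore $\sum_{i'\neq i,\,j'\neq j} a_{i'j'} = a_{ij}$. Substituting yields precisely $\langle g, x_{i\to j}\rangle = \tfrac{\alpha_{ij}}{n}$, as required.

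For the $L_2$-norm formula, I would expand $\|g\|_2^2 = \sum a_{ij} a_{i'j'} \mathbb{E}[x_{i\to j} x_{i'\to j'}]$ via the same four-case breakdown. The diagonal contributes $\tfrac{1}{n}\sum a_{ij}^2$, and reusing the identity $\sum_{i'\neq i,\,j'\neq j} a_{i'j'} = a_{ij}$ the off-diagonal contributes $\tfrac{1}{n(n-1)}\sum_{i,j} a_{ij}\cdot a_{ij}$. Adding gives $\bigl(\tfrac{1}{n} + \tfrac{1}{n(n-1)}\bigr)\sum a_{ij}^2 = \tfrac{1}{n-1}\sum a_{ij}^2$. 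The main (and essentially only) obstacle is setting up the inclusion--exclusion collapse correctly so as to exploit both the row-sum and the column-sum identities for $(\alpha_{ij})$; everything else is bookkeeping.
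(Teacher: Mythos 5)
Your proof is correct. Note that the paper does not actually prove this lemma: it cites it to Keevash, Lifshitz and Minzer (\cite{keevash2024largest}, Lemmas 3.1 and 3.2), so there is no in-paper argument to compare against. Your verification is the natural direct one and it goes through. The key computations all check out: $\sum_j \mathbb{E}[f_{i\to j}] = \sum_i \mathbb{E}[f_{i\to j}] = n\,\mathbb{E}[f]$ (using that $\sum_j x_{i\to j}\equiv 1$ and $\sum_i x_{i\to j}\equiv 1$), hence $(a_{ij})$ has vanishing row and column sums; the four-case formula for $\mathbb{E}[x_{i'\to j'}x_{i\to j}]$ (namely $1/n$, $0$, $0$, $1/(n(n-1))$) is exactly right; the inclusion--exclusion identity $\sum_{i'\neq i,\,j'\neq j}a_{i'j'}=a_{ij}$ follows from the row/column-sum relations; and plugging in gives $\langle g,x_{i\to j}\rangle = a_{ij}\bigl(\tfrac1n+\tfrac1{n(n-1)}\bigr) = \tfrac{a_{ij}}{n-1}=\tfrac{\alpha_{ij}}{n}$, matching $\langle f,x_{i\to j}\rangle - \tfrac1n\mathbb{E}[f]$. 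One detail worth making explicit (you use it implicitly): the functions $\{x_{i\to j}-\tfrac1n\}$ do span all of $V_{=1}$, because every $h\in V_{\le 1}$ is a linear combination of the $x_{i\to j}$ and subtracting the mean converts that combination to one in the centered indicators. With that spanning fact stated, the argument is complete, and the norm computation reuses the same identity cleanly to give $\|g\|_2^2 = \bigl(\tfrac1n+\tfrac1{n(n-1)}\bigr)\sum a_{ij}^2 = \tfrac1{n-1}\sum a_{ij}^2$.
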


The second result, which is the `hypercontractivity' component in our proof, is the \emph{level-$d$ inequality} for functions over $S_n$. This inequality is the main technical result in the recent breakthrough work of Keevash and Lifshitz~\cite{keevash2023sharp} on hypercontractivity for global functions over $S_n$.
\begin{thm}[{\cite[Theorem 1.8]{keevash2023sharp}}]\label{theorem:level-d for global functions_intro}
There exists $C>0$, such that for any $n \in \mathbb{N}$ and for any $\gamma>1$, if $A \subseteq S_n$ is $\gamma$-global and $d\le \min(\tfrac{1}{8}\log(\frac{1}{\mu(A)}), 10^{-5}n)$, then 
\[\|1_A^{= d} \|_2^2\le \mu(A)^2 \left( C \gamma^4 d^{-1} \log \left(\frac{1}{\mu(A)}\right) \right)^d,\]
where $\mu$ is the uniform measure on $S_n$.
\end{thm}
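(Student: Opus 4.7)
The plan is to follow the two-step template that converts hypercontractivity into a level-$d$ inequality, with the wrinkle that the relevant hypercontractive inequality on $S_n$ is available only under a globalness hypothesis.

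\smallskip\noindent\emph{Step 1 (the hard step): a sharp global hypercontractive inequality.} I would establish that, for the natural noise operator $T_\rho$ that acts as multiplication by $\rho^d$ on each $V_{=d}$,
\[
\|T_\rho f\|_2 \;\le\; \|f\|_{1 + \rho^2/(C_0\gamma^4)}
\]
for every $\gamma$-global $f\colon S_n \to \mathbb{R}$, in the range $d \le 10^{-5} n$. The replacement $\rho^2 \leadsto \rho^2/\gamma^4$ of the Bonami-Beckner exponent is precisely what will produce the $\gamma^4$ factor in the final bound. The cleanest route I see is to transfer the corresponding global hypercontractive inequality on slices $\binom{[N]}{n}$ through the perfect-matching embedding $\sigma \mapsto \{(i,\sigma(i)) : i\in[n]\} \subseteq [n]\times [n]$, realizing $S_n$ as a sub-distribution of $\binom{[n^2]}{n}$; one must check that $\gamma$-globalness on $S_n$ pushes forward to $O(\gamma)$-globalness on the slice and that the eigenspaces $V_{=d}$ embed compatibly. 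The assumption $d \le 10^{-5} n$ is exactly the regime in which this transfer loses only constant factors.

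\smallskip\noindent\emph{Step 2 (the routine deduction).} Given $A \subseteq S_n$ with $\mu := \mu(A)$, set $g := 1_A^{=d}$. Self-adjointness of $T_\rho$ and $T_\rho g = \rho^d g$ give
\[
\|g\|_2^2 \;=\; \langle 1_A, g\rangle \;=\; \rho^{-d}\langle T_\rho 1_A, g\rangle \;\le\; \rho^{-d}\,\|T_\rho 1_A\|_2\,\|g\|_2,
\]
and Step 1 applied to $f = 1_A$ then yields $\|g\|_2 \le \rho^{-d}\mu^{1/p}$ with $p = 1 + \rho^2/(C_0\gamma^4)$. Optimizing by choosing $\rho^2 = C_0\gamma^4 \,d/\log(1/\mu)$ — an admissible choice ($\rho \le 1$) within the stated range of $d$ — gives $\rho^{-2d} = (C_0\gamma^4 \log(1/\mu)/d)^d$ and $\mu^{2/p} = \mu^2 \cdot e^{O(d)}$, so that $\|g\|_2^2 \le \mu^2 \bigl(C\gamma^4 d^{-1}\log(1/\mu)\bigr)^d$ after absorbing constants.

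\smallskip\noindent\emph{Main obstacle.} Essentially everything rests on achieving the sharp $\gamma^4$ power in Step 1, rather than some cruder $\gamma^{f(d)}$ exponent. Without globalness the inequality is false outright: the dictator $A = \{\sigma: \sigma(i) = j\}$ is only $\sqrt{n}$-global and has $\|1_A^{=1}\|_2^2 \asymp \mu \gg \mu^2 \log(1/\mu)$, so no purely spectral argument can succeed — one must quantitatively exploit the flatness of a global function under low-dimensional restrictions. On product spaces, global hypercontractivity with the correct power tensorizes cleanly from the single-coordinate case, but $S_n$ has no such factorization; controlling how $\gamma$-globalness on $S_n$ interacts with an inductive argument over sub-permutation-spaces $P_{n,k}$ without accumulating extra powers of $\gamma$ is, I expect, the technical heart of the argument.
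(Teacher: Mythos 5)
This statement is \cite[Theorem~1.8]{keevash2023sharp}, which the paper imports as a black box and does not prove; there is therefore no in-paper argument to compare your attempt against. Evaluating your sketch on its own terms:

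Your two-step template (sharp global hypercontractive inequality, then the standard self-adjointness/optimization deduction) is the right overall shape, and Step~2 is the correct routine calculation in outline. However, the specific form of the hypercontractive inequality you posit in Step~1 does not produce the stated bound. With $p=1+\rho^2/(C_0\gamma^4)$, the choice $\rho^2=C_0\gamma^4 d/\log(1/\mu)$ gives $\rho^{-2d}=\bigl(d^{-1}\log(1/\mu)/(C_0\gamma^4)\bigr)^d$, i.e.\ a factor $\gamma^{-4d}$ rather than $\gamma^{+4d}$; the $\gamma^4$ lands on the wrong side of the fraction. Moreover your claim that this choice is admissible ``within the stated range of $d$'' is false once $\gamma$ is large: $\rho\le 1$ forces $d\le\log(1/\mu)/(C_0\gamma^4)$, which is strictly smaller than the theorem's range $d\le\tfrac18\log(1/\mu)$ whenever $\gamma^4>8/C_0$. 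The correct shape of the sharp global inequality puts the globalness penalty on the admissible \emph{noise rate} (roughly $\|T_\rho f\|_2\le\|f\|_{1+\rho^2}$ only for $\rho\lesssim\gamma^{-2}$), so that the cap $\rho=\Theta(\gamma^{-2})$ is what produces $\gamma^{4d}$; you have moved the penalty into the H\"older exponent, which reverses the effect.

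The proposed route for Step~1 is also not viable as described. The embedding $\sigma\mapsto\{(i,\sigma(i))\}$ lands in the set of perfect matchings of $K_{n,n}$, which is an exponentially small, highly structured subset of the slice $\binom{[n]\times[n]}{n}$; hypercontractivity on the slice does not descend to such a conditioned sub-distribution. More fundamentally, the degree decompositions are incompatible: a $d$-junta $f(\sigma)=g(\sigma(i_1),\dots,\sigma(i_d))$ on $S_n$ depends, after the embedding, on the $dn$ slice coordinates $\{(i_\ell,j):\ell\in[d],\,j\in[n]\}$, so $V_{=d}(S_n)$ sits inside slice-levels up to $dn$, not $d$. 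Pushing forward $\gamma$-globalness to ``$O(\gamma)$-globalness on the slice'' likewise needs an argument: a single $S_n$-restriction corresponds to setting $n$ slice coordinates, not one. These mismatches are precisely why \cite{keevash2023sharp} works directly on $S_n$ (with the degree decomposition of \cite{EFP11} and an inductive argument over sub-permutation spaces) rather than by transfer from a product-like space.
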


\section{Global Families of Permutations Cannot be
Cross $1$-Intersecting} \label{sec:spreadness}

In this section we prove the spreadness lemma that will be used in the proof of Theorem~\ref{thm:main}.
\begin{prop}\label{lem:cross-intersecting-global}
        For any $\gamma>0$, there exist $n_0,c_0$, such that the following holds. Let $n \ge n_0$ and $i \le c_0n$, and let $A \subseteq S_n, B\subseteq S_{n-i}$ be families of permutations that are $\gamma$-global (inside $S_n, S_{n-i}$, respectively). Then $A,B$ are not cross $1$-intersecting. 
\end{prop}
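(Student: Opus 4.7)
The plan is to view each permutation as its graph in $[n]\times[n]$, translate $\gamma$-globalness into spreadness of the induced matching measures, and then apply the iterated refinement inequality (Theorem~\ref{thm:spread-families}) twice against a single $1/2$-random subset $W_0\subseteq[n]^2$ in order to produce $\sigma\in A$ and $\tau\in B$ whose graphs are disjoint, contradicting the cross $1$-intersecting hypothesis.

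First I would set up the geometric picture. Associate each $\pi\in S_n$ with its graph $M_\pi:=\{(k,\pi(k)):k\in[n]\}$, and regard $B\subseteq S_{n-i}$ as a family in a sub-permutation-space $P_{n,i}$ with fixed pairs $E_0=\{(a_\ell,b_\ell):\ell\in[i]\}$, so that every $\tau\in B$ decomposes as $M_\tau=E_0\sqcup M'_\tau$, where $M'_\tau$ is a perfect matching in $U_0:=([n]\setminus R)\times([n]\setminus C)$ for $R=\{a_\ell\}$, $C=\{b_\ell\}$. Squaring the defining inequality of $\gamma$-globalness yields $|A_{I\to J}|/|A|\le\gamma^{2|I|}(n-|I|)!/n!$, and since $(n!/(n-k)!)^{1/k}\ge(n!)^{1/n}\sim n/e$ for every $k\le n$ (Stirling is required only at the extreme scale $k\approx n$), the uniform measure $\mu_A$ on $\{M_\sigma:\sigma\in A\}$ is $\Omega(n/\gamma^2)$-spread on $[n]^2$. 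The identical computation makes the measure $\mu'_B$ of $\{M'_\tau:\tau\in B\}$ an $\Omega(n/\gamma^2)$-spread measure on $U_0$.

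To neutralise the $i$ fixed pairs of $B$, I would pass to $A'=\{\sigma\in A:M_\sigma\cap E_0=\emptyset\}$. A union bound using $|A_{a_\ell\to b_\ell}|/|A|\le\gamma^2/n$ gives $|A'|/|A|\ge1-i\gamma^2/n\ge1/2$ provided $c_0\le1/(2\gamma^2)$; together with the elementary comparison $|A'_{I\to J}|\le|A_{I\to J}|$ this shows that $A'$ is $\sqrt{2}\gamma$-global, so $\mu_{A'}$ remains $\Omega(n/\gamma^2)$-spread. Crucially, for $\sigma\in A'$ and $\tau\in B$ one has $M_\sigma\cap M_\tau=M_\sigma\cap M'_\tau$, so it suffices to find $\sigma\in A'$ and $\tau\in B$ with $M_\sigma\cap M'_\tau=\emptyset$. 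Sampling $W_0\subseteq[n]^2$ as a $1/2$-random subset and applying Theorem~\ref{thm:spread-families} to $\mu'_B$ on $U_0$ with $m=\lceil 4\log_2 n\rceil$ and $\delta=1/(2m)$ gives $\Pr[\exists\tau\in B:M'_\tau\subseteq W_0]\ge1-n^{-3}$ for $n\ge n_0(\gamma)$; applying the same theorem to $\mu_{A'}$ against the complementary $1/2$-random set $[n]^2\setminus W_0$ gives $\Pr[\exists\sigma\in A':M_\sigma\cap W_0=\emptyset]\ge1-n^{-3}$. A union bound produces, with positive probability, a simultaneous pair $(\sigma,\tau)$ with $\sigma\cap\tau=\emptyset$, contradicting the cross $1$-intersecting hypothesis.

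The main obstacle is verifying that the spread bound on $\mu_A$ (and on $\mu'_B$) holds uniformly in the size of the tested partial matching --- the factor $(n-k)!/n!$ becomes tiny at $k\approx n$ and Stirling must be invoked to keep the spread parameter of order $n/\gamma^2$ at every scale. A secondary delicate point is that $A'$ is defined by a conjunction of \emph{negative} restrictions, so converting the comparison $|A'_{I\to J}|\le|A_{I\to J}|$ into the desired $O(\gamma)$-globalness of $A'$ relies on the density lower bound $|A'|\ge|A|/2$, which is precisely where the hypothesis $i\le c_0 n$ with $c_0\le 1/(2\gamma^2)$ enters.
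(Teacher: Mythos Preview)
Your argument is correct and follows the same core strategy as the paper: translate $\gamma$-globalness into $\Omega(n/\gamma^{2})$-spreadness of the graph measures on $[n]\times[n]$, then apply the iterated refinement inequality (Theorem~\ref{thm:spread-families}) to the two halves of a $1/2$-random partition to locate $\sigma\in A$ and $\tau\in B$ with disjoint graphs. The one substantive difference is how the $i$ fixed pairs $E_{0}$ from the embedding $S_{n-i}\hookrightarrow S_{n}$ are handled. The paper keeps $E_{0}$ inside $M_{\tau}$ and pays an extra factor of $2^{-i}$ for the event that the random complement $S^{c}$ covers those $i$ prescribed points; this is where the hypothesis $i\le c_{0}n$ enters in their argument. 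You instead prune $A$ to $A'=\{\sigma\in A:M_{\sigma}\cap E_{0}=\emptyset\}$, use the union bound $|A\setminus A'|\le i\gamma^{2}|A|/n$ (which consumes the hypothesis $i\le c_{0}n$ with $c_{0}\le 1/(2\gamma^{2})$) to retain $|A'|\ge|A|/2$, and then only need $M_{\sigma}\cap M'_{\tau}=\emptyset$. Both devices work; yours keeps the two applications of Theorem~\ref{thm:spread-families} symmetric and avoids carrying the $2^{-i}$ factor through the final probability estimate, while the paper's version would still succeed for slightly larger $i$ (any $i=o(n)$ beating an exponential-in-$n$ slack) if that were ever needed.
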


\medskip \noindent \emph{Proof.}
      Assume to the contrary that $A \subseteq S_n, B\subseteq S_{n-i}$ are $\gamma$-global and cross $1$-intersecting. 
      First, we embed $S_{n-i}$ into $S_n$ by sending $\sigma \in S_{n-i}$ to $\sigma' \in S_n$ that agrees with $\sigma$ on $[n-i]$ and is the identity permutation on $[n] \setminus [n-i]$. Then, we embed $S_n$ into $U_n=[n]^n$ in the natural way (i.e., $\sigma$ goes to $(\sigma(1),\ldots,\sigma(n))$). Applying the first embedding to $B$ to obtain $B' \subset S_n$ and the second embedding to $A,B'$, we  obtain two cross $1$-intersecting families $\bar{A},\bar{B} \subset U_n$.

      We claim that $\bar{A}$ is $(\gamma e)$-global in $U_n$, and $\bar{B}$ is $(2 \gamma e)$-global in the projection on $[n]^{n-i}$. Indeed, note that the embedding preserves the structure of the sets and only multiplies the measure of all sets by the same value. The assertion we want to prove regarding $\bar{A}$ is that for each $d$-restriction $\bar{A}_{S\rightarrow x}$, we have:
      $$\frac{|\bar{A}|}{n^n} \cdot (\gamma e)^{d} \ge \frac{|\bar{A}_{S\rightarrow x}|}{n^{n-d}}.$$
      This is equivalent to
      $$\frac{|A|}{n!} \cdot (\gamma e)^{d} \ge \frac{|A_{S\rightarrow x}|}{(n-d)!} \cdot \frac{n^d}{n \dots (n-d+1)},$$
      which follows from the original globalness assumption as $e^d \ge \frac{n^d}{n \dots (n-d+1)}$. 
      By the same argument, we can see that the restriction of $\bar{B}$ to the first $n-i$ coordinates is $(\gamma e)$-global inside $[n-i]^{n-i}$. Adding the extra $i$ output options can cost only a factor of $\frac{n}{n-i} \le 2$ per restricted coordinate.
       
      As a second step, we embed $U_n$ into the space $((\{0,1\}^{n})^n,\mu_p) \sim (\{0,1\}^{n^2},\mu_p)$, by setting $E(i_1,\ldots,i_n)=(e_{i_1},\ldots,e_{i_n})$, where $e_{i_j}$ is the $i_j$'th unit vector in $\{0,1\}^n$. For convenience, we use the notation $N:=n^2$ hereafter.

      Note that the embedding preserves the intersection property. 
      In addition, since the image of the embedding is included in the `slice' $\{x \in \{0,1\}^{N}: \sum x_i = n\}$ on which the $\mu_p$ measure is uniform, the effect of the embedding on the measure of all sets is multiplication by the same factor. 

      Define $\hat{A}, \hat{B} \subset \{0,1\}^{N}$ by $\hat{A}=E(\bar{A})=\{E(x):x \in \bar{A}\}$ and $\hat{B}=E(\bar{B})$. Notice that $\hat{A}$ is $\frac{n}{\gamma e}$-spread in $\{0,1\}^{N}$, and that the restriction of $\hat{B}$ to the first $(N-i \cdot n)$ coordinates is $\frac{n}{2 \gamma e}$-spread in $\{0,1\}^{N-i\cdot n}$. In addition, each element in the family has exactly $n$ 1's among its $N$ coordinates. Hence, to complete the proof it is sufficient to prove the following lemma.
\begin{lem}
For any $\gamma>0$, there exist $n_0,c_0$, such that for each $n \ge n_0$ and each $i \le c_0n$, the following holds. Let $w \in \{0,1\}^{i \cdot n}$ be a constant vector with $i$ 1's. Let $\hat{A}\subset \{0,1\}^{N}$ be an $\frac{n}{\gamma e}$-spread family all of whose elements have exactly $n$ 1's, and let $\hat{B}\subset \{0,1\}^{N-i\cdot n}$ be an $\frac{n}{2\gamma e}$-spread family all of whose elements have exactly $n-i$ 1's.
Then the families $\hat{A}, \hat{B} \times w \subset \{0,1\}^N$ cannot be cross $1$-intersecting.    
\end{lem}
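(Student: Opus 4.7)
\medskip\noindent\textbf{Proof plan.} The strategy is to find $a \in \hat A$ and $\hat b \in \hat B$ whose supports, together with that of $w$, are pairwise disjoint, contradicting the assumption that $\hat A$ and $\hat B \times w$ are cross $1$-intersecting. I would carry this out in three steps, combining a union bound to handle the fixed vector $w$ with two applications of the iterated refinement inequality (Theorem~\ref{thm:spread-families}) to a coupled pair of random subsets.

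First, I remove the influence of $w$. Let $W_1 \subset [N] \setminus [N-in]$ denote the $i$-element support of $w$. By the $\tfrac{n}{\gamma e}$-spreadness of $\hat A$ and a union bound,
\[
\Pr_{a \in \hat A}[a \cap W_1 \ne \emptyset] \le \frac{i \gamma e}{n} \le \frac{1}{2},
\]
provided $c_0 \le \frac{1}{2\gamma e}$. Hence $\hat A' := \{a \in \hat A : a \cap W_1 = \emptyset\}$ comprises at least half of $\hat A$, and a direct computation (using the conditioning on an event of probability $\ge 1/2$) shows that $\hat A'$ is a $\tfrac{n}{2\gamma e}$-spread family of $n$-subsets of $[N] \setminus W_1$. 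Now it suffices to find $a \in \hat A'$ and $\hat b \in \hat B$ whose supports are disjoint inside $[N-in]$.

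Second, I couple two random sets. Let $V$ be a $\tfrac{1}{2}$-random subset of $[N] \setminus W_1$, and set $W_2 := [N-in] \setminus V$, which is then a $\tfrac{1}{2}$-random subset of $[N-in]$. I want the two events
\[
E_1 := \{\exists\, a \in \hat A' : a \subseteq V\}, \qquad E_2 := \{\exists\, \hat b \in \hat B : \hat b \subseteq W_2\}
\]
to co-occur. Note that on $E_1 \cap E_2$, the witnesses satisfy $a \cap \hat b \subseteq V \cap W_2 = \emptyset$, and $a \cap W_1 = \emptyset$ by construction, so $a$ is disjoint from $\mathrm{supp}(\hat b, w)$. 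To bound $\Pr[E_1^c]$ and $\Pr[E_2^c]$, I apply Theorem~\ref{thm:spread-families} in each ambient cube (of sizes $N-i$ and $N-in$, respectively) with $m = \lceil \log_2(3n)\rceil$ and $\delta = 1/(2m)$, so that $m\delta = 1/2$. With the spreadness parameter $r = \tfrac{n}{2\gamma e}$, one has $r\delta = \Omega(n/\log n)$, so $\log_2(r\delta) \ge \tfrac{1}{2}\log_2 n$ for $n \ge n_0$, and each failure probability is bounded by $\bigl(\tfrac{10}{\log_2 n}\bigr)^{\log_2(3n)} \cdot n = o(1)$. A union bound yields $\Pr[E_1 \cap E_2] > 0$, producing the desired disjoint pair and the contradiction.

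The main obstacle is the coupling: $E_1$ and $E_2$ must be realised by a single random draw that simultaneously makes $V$ look $\tfrac{1}{2}$-random inside $[N]\setminus W_1$ \emph{and} makes the complementary set $W_2 = [N-in] \setminus V$ look $\tfrac{1}{2}$-random inside $[N-in]$. Defining $W_2$ as the complement of $V$ restricted to $[N-in]$ achieves both, since $V$ is uniformly $\tfrac{1}{2}$-random on each coordinate independently. The remaining work is bookkeeping: (i) verifying that the $\tfrac{1}{2}$-loss in spreadness after passing to $\hat A'$ is harmless, and (ii) choosing $m$ of order $\log n$ so that the polynomial factor $|\mu_0| \le n$ in Theorem~\ref{thm:spread-families} is dominated by the super-polynomial decay in $m$.
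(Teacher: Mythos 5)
Your proof is correct, and the overall strategy is the same as the paper's: split the coordinates randomly, apply Theorem~\ref{thm:spread-families} once to each family, and take a union bound to force the two good events to co-occur. The one genuine difference is how the fixed vector $w$ is absorbed. The paper draws a single $\tfrac12$-random $S \subset [N]$, applies the iterated refinement inequality to $\hat A$ inside $S$ and to $\hat B$ inside $S^c \cap [N-in]$, and then pays an extra factor $2^{-i}$ for the event $\mathrm{supp}(w) \subset S^c$; to beat that $2^{-i} \ge 2^{-c_0 n}$ loss, the paper must take $m = \Theta(n)$ and $\delta = \Theta(1/n)$ so that each failure probability is of the form $n \cdot C(\gamma)^n$ with $C(\gamma)<1$. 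You instead dispose of $w$ up front by pruning $\hat A$ to the subfamily $\hat A'$ avoiding $\mathrm{supp}(w)$, which is legitimate precisely because $\hat A$ is spread and $|\mathrm{supp}(w)| = i \le c_0 n$ is small, at the cost of only a factor $2$ in the spreadness parameter; this eliminates the $2^{-i}$ penalty and lets you get away with the much weaker choice $m = \Theta(\log n)$, $\delta = \Theta(1/\log n)$, which yields an $o(1)$ failure bound rather than an exponentially small one. Both routes are valid; yours is slightly cleaner in that the role of the smallness hypothesis $i \le c_0 n$ (it feeds directly into the union-bound estimate $\tfrac{i\gamma e}{n}\le\tfrac12$ and the spreadness loss) is more transparent, whereas the paper's version keeps the stronger exponential failure bound, which is not needed here but comes for free.
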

\begin{proof}
    The proof uses ideas from the proof of~\cite[Lemma~9]{KZ22}. Let $S$ be a $\frac{1}{2}$-random subset of $\{0,1\}^{N}$ and let $S^c$ be its complement. Taking a sufficiently large constant $C(\gamma)$ and applying Theorem~\ref{thm:spread-families} with the parameters $\delta = \frac{C(\gamma)}{n}$, $m = \frac{n}{2C(\gamma)}$, and the measure $\mu_0$ defined by $\mu_0(x) = \frac{1\{\hat{A}\}}{\mu(\hat{A})}$, we get that with probability at least $1-n \cdot (C_0(\gamma))^n$, there is $v \in A$ such that $v \subseteq S$ (where $C_0(\gamma)$ depends only on $\gamma$). Applying the same argument to $\hat{B}$, we obtain that with probability at least $(1-(n-i) \cdot (C_1(\gamma))^n) \cdot 2^{-i}$ (where $C_1(\gamma)$ depends only on $\gamma$), there exists $u \in \hat{B} \times w$ such that $u \subseteq S^c$. Here, the extra factor $2^{-i}$ is the probability that $S^c$ contains the $i$ 1's of $w$. As $2^{-i} \geq 2^{-c_0n}$, for a sufficiently small $c_0$ the sum of the probabilities is greater than 1. Hence, there is a choice of $S$ for which the events $v \subset S$ for some $v \in \hat{A}$ and $u \subset S^c$ for some $u \in \hat{B} \times w$ occur at the same time, contradicting the assumption that $\hat{A},\hat{B} \times w$ are cross $1$-intersecting. This completes the proof of the lemma and of the proposition.
\end{proof} 

\noindent \textbf{Remark.} We note that for the proof of Theorem~\ref{thm:main}, we do not need the full strength of Proposition~\ref{lem:cross-intersecting-global}; it is sufficient to show that cross $1$-intersecting global families of permutations $A,B$ must satisfy $|A||B| \ll ((n-t)!)^2$. One may try to achieve this using the global hypercontractivity technique, and specifically, using~\cite[Proposition~5]{keller2024t}, after embedding $A,B$ into $(\{0,1\}^{n^2},\mu_p)$ for an appropriate value of $p$. It turns out that this approach works only for $t = O(\frac{n}{\log^2 n})$. Indeed, if one uses the embedding described above (which leads to $p=\frac{1}{n}$), the loss in the measure during the transition makes the result of~\cite[Proposition~5]{keller2024t} too weak for implying meaningful results for the initial families. If instead, one applies the more complex embedding used in~\cite[Section 4.3]{keller2024t} which leads to $p=\frac{\log n}{n}$, then there is no measure loss in the transition, but the application of~\cite[Proposition~5]{keller2024t} yields only a bound of $e^{-c\frac{n}{\log n}}$ on $|A||B|$. This value is $\ll ((n-t)!)^2$ only for $t=O(\frac{n}{\log^2 n})$. Since we want to cover the entire range $t=O(\frac{n}{\log n})$, the technique of~\cite[Proposition~5]{keller2024t} is not sufficient, and we use the spreadness argument described above instead. 
\section{Density Bump Inside a Dictatorship}

In this section we essentially show that if two `large' families $A,B \subset S_n$ are cross $(t-1)$-intersection free then there must exist a restriction $i 
\to j$ such that both $\frac{|A_{i \to j}|}{|A|}\gg \frac{1}{n}$ and $\frac{|B_{i \to j}|}{|B|}\gg \frac{1}{n}$. Formally, the result is stated not for pairs of families in $S_n$, but rather for families $A,B$ that reside in non-intersecting sub-permutation spaces $P_{n,k}^1,P_{n,k}^2$. This somewhat more cumbersome statement (which has almost no effect on the proof) is needed, since this is the setting in which we will apply this assertion in the proof of Theorem~\ref{thm:main}.

\begin{prop}\label{lem:non-densitybump}    
For every $m>0$, there exist $n_0 \in \mathbb{N}$ and $c_1,c_2>0$ such that for every $n>n_0$ and $t \le  c_1 n$, the following holds. Let $k \leq \frac{n}{100}$ and let $P_{n,k}^1,P_{n,k}^2$ be non-intersecting sub-permutation spaces of $S_n$ (i.e., two spaces obtained from $S_n$ by non-intersecting restrictions of the same set $T$ of $k$ coordinates). Let $A\subset P^1_{n,k}, B\subset P^2_{n,k}$ be cross$(t-1)$-intersection free families. 
Assume $A,B$ do not have a density bump inside a dictatorship that is not on the restricted coordinates, meaning there are no $i_1 \in [n]\setminus T,j_1 \in [n]$, 
such that $|A_{i_1\rightarrow j_1}|>|A|\cdot \frac{m}{n}$ and $|B_{i_1\rightarrow j_1}|>|B|\cdot \frac{m}{n}$. Then 
\[
\mu(A) \cdot \mu(B) \le  e^{-c_2 \cdot n},
\]
where $\mu(A),\mu(B)$ are taken w.r.t.~the uniform measure in $P_{n,k}^1,P_{n,k}^2$, respectively. 
\end{prop}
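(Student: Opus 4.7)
We argue by contradiction. Assume $\mu(A)\mu(B) > e^{-c_2 n}$ for a sufficiently small $c_2 > 0$, and aim to derive a contradiction from Proposition~\ref{lem:cross-intersecting-global}. Following the proof overview in Section~\ref{sec:sub:Proof-overview}, the plan is to iteratively refine $A$ and $B$ into subfamilies which, after passing to a suitably reduced permutation space, become simultaneously $\gamma$-global and cross $1$-intersecting -- a configuration ruled out by Proposition~\ref{lem:cross-intersecting-global}.

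\textbf{Iterative refinement.} For $\ell = 0, 1, \ldots, t-2$ we maintain subfamilies $A_\ell, B_\ell$ (with $A_0 = A$, $B_0 = B$) living in sub-permutation spaces obtained from $P_{n,k}^1, P_{n,k}^2$ by imposing $\ell$ additional common restrictions $i_1 \to j_1, \ldots, i_\ell \to j_\ell$ on coordinates outside $T$. We maintain the invariants that (i) after passing to the reduced permutation space, $A_\ell, B_\ell$ are cross $(t-1-\ell)$-intersection free, and (ii) $\mu(A_\ell)\mu(B_\ell) \geq \mu(A)\mu(B)/K^\ell$ for a universal constant $K$. In each iteration we first globalize $A_\ell, B_\ell$ separately by fixing densest dictatorships until both become $\gamma$-global, using the no-common-density-bump hypothesis to cap the number of coordinates consumed per step. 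Then Lemma~\ref{lem:level-1}, combined with the $d=1$ case of Theorem~\ref{theorem:level-d for global functions_intro}, yields
\[
\sum_{i,j}\bigl(\mu(A_{\ell,\, i \to j}) - \mu(A_\ell)\bigr)^2 \;\leq\; C\, n\,\gamma^4\,\mu(A_\ell)^2 \log\bigl(1/\mu(A_\ell)\bigr),
\]
and the analogous bound for $B_\ell$. A Markov-type counting argument, valid as long as $\log(1/(\mu(A_\ell)\mu(B_\ell)))$ is at most a small constant multiple of $n/\gamma^4$, then produces a good common pair $(i_{\ell+1}, j_{\ell+1})$ for which both relative densities drop by at most a constant factor. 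Since the common restriction forces $i_{\ell+1}$ into every intersection, the residual families become cross $(t-2-\ell)$-intersection free in the further reduced space. After $t-1$ iterations we arrive at $\gamma$-global subfamilies $A^*, B^*$ that are cross $1$-intersecting and live in a reduced sub-permutation space of dimension at least $n - k - (t-1) \geq (1-c_0)n$, with $\mu(A^*)\mu(B^*) > 0$; Proposition~\ref{lem:cross-intersecting-global} then delivers the contradiction.

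\textbf{Main obstacle.} The chief technical challenge is balancing the coordinate and density-loss budgets over $t - 1 = \Omega(n/\log n)$ iterations. The hypercontractive step is effective only when $\log(1/(\mu(A_\ell)\mu(B_\ell)))$ remains a small fraction of $n/\gamma^4$, so the compounded loss factor $K^{t-1}$ must leave $\mu(A_\ell)\mu(B_\ell)$ large enough for the level-$1$ inequality to still produce a good common pair; this dictates the interplay between $c_2$, $K$, $\gamma$ and the slope $c_1$ of $t$. Equally delicate is the globalization phase: a naive implementation might consume $\Theta(\log(1/\mu(A_\ell)))$ coordinates per iteration, for a total of $\Omega(n^2/\log n)$ coordinates -- far exceeding $n$. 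The no-common-density-bump hypothesis is precisely what caps the globalization cost per step, ensuring that the final reduced space still has dimension at least $(1-c_0)n$ and thus lies within the scope of Proposition~\ref{lem:cross-intersecting-global}.
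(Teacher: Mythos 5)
Your high-level strategy -- iterate ``globalize, then find a good common restriction'' down from cross $(t-1)$-intersection free to cross $1$-intersecting, then invoke Proposition~\ref{lem:cross-intersecting-global} -- is exactly the paper's plan, and you correctly identify the two compounding budgets (density loss and coordinate consumption) as the main threat. However, there are genuine gaps in the mechanism you sketch.

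First, your claim that $A_\ell, B_\ell$ live in spaces obtained from $P^1_{n,k},P^2_{n,k}$ ``by imposing $\ell$ additional \emph{common} restrictions'' cannot be right, because the globalization restrictions you apply to $A_\ell$ and to $B_\ell$ are necessarily different tuples. Once they are different, the two families live in sub-permutation spaces that are \emph{not} non-intersecting in the sense required by the core lemma, and a restriction on one family can silently change the intersection structure with the other. The paper handles this with an \emph{avoiding} step that you omit: after taking a $4$-global restriction $A' = A_{S\to x}$ of $A$, one replaces $B$ by $B' = B_{S\to x^c} = \{\tau \in B : \tau|_S \neq x\}$ without leaving the ambient space, and only then globalizes $B'$, doing the symmetric avoiding step back on $A'$. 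This guarantees that no single-coordinate restriction is shared (the ``non-cross-intersecting'' hypothesis of Lemma~\ref{lem:good_restriction}) and that reducing the spaces does not hide unintended intersections. Without the avoiding step, the claim that the residual families become cross $(t-2-\ell)$-intersection free after a common restriction does not follow.

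Second, you say ``the no-common-density-bump hypothesis is precisely what caps the globalization cost per step.'' This is a misattribution. The hypothesis is used only to control the \emph{mass} lost in the avoiding step, via $|B'| \ge |B|\left(1 - \tfrac{k}{n}|S|\right) > \tfrac{|B|}{2}$ for $k = \max\{m,128\}$. The coordinate budget is not capped per step at all -- a single iteration can consume $\Theta(n)$ coordinates. What saves the argument is a \emph{global} measure accounting: since each $4$-global restriction multiplies the measure by at least $4^{k_i}$ while Step~2 and the avoiding steps divide it by at most a constant per iteration, and measure cannot exceed $1$, one deduces $\sum_i k_i \le c_6 n$ over all $t-1$ iterations. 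Your proposed per-step cap is neither available nor used.

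Finally, at the end of the process the two families live in \emph{different} sub-permutation spaces (because of the asymmetric globalization/avoiding restrictions), so Proposition~\ref{lem:cross-intersecting-global} cannot be applied directly. The paper performs a coordinate relabeling/shift that maps the space of $\bar{B}$ into a subspace of the space of $\bar{A}$, which can only increase intersection sizes and preserves globalness; only then is Proposition~\ref{lem:cross-intersecting-global} invoked (this is exactly why that proposition is stated for $A \subset S_n$, $B\subset S_{n-i}$). Your proposal skips this alignment step entirely.
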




As was described in Section~\ref{sec:sub:Proof-overview}, to prove the assertion, we assume to the contrary that for some $A,B$ such a restriction does not exist. We construct global restrictions of $A,B$, and then we use an iterative process to gradually transform the families into cross $1$-intersecting families without reducing their measure too much and while preserving globalness. Then, we apply Proposition~\ref{lem:cross-intersecting-global} to the resulting families to get a contradiction. 



A crucial technical element of the proof is managing the restricted coordinates. At each of the $t-1$ steps, we apply restrictions on both functions in order to make them global. The restrictions may be in different sets of coordinates, and we must make sure that they do not create unintended intersections between elements of the families. Furthermore, we have to make sure that the `common restrictions' made during the process are not in coordinates in which one of the functions was restricted separately before. To achieve this, we formulate the core lemma below not for families in $S_n$, but rather for families in different sub-permutation spaces $P^1_{n,k},P^2_{n,k'}$, which captures the fact that the families we consider at a certain step are a result of different restrictions made at the previous steps. In addition, we have to make sure that the total number of restricted coordinates is not too large. This is not obvious at all, as the number of steps (i.e., $t-1$) can be as large as $c_1n$. We prove that nevertheless, the total number of restricted coordinates (in all steps) never exceeds $c_6n$, for a small constant $c_6$ that depends only on $c_1,c_2$. 


\subsection{The core lemma}

Our core lemma states the following.
\begin{lem}\label{lem:good_restriction}
    There exist $n_0 \in \mathbb{N}$ and $c_3>0$ such that for every $n \geq n_0$, the following holds. Let $k \leq \frac{n}{100}$ and let $A' \subset P^1_{n,k}$, $B' \subset P_{n,k}^2$, where  $P_{n,k}^1,P_{n,k}^2$ are non-intersecting sub-permutation spaces of $S_n$.
    Let $A = A'_{I \to J}$, $B = B'_{I' \to J'}$, where $|I|,|I'|\leq \frac{n}{100}$, be subsets of non-cross-intersecting $8$-global restrictions of $A'$ and $B'$, respectively (where `non-cross-intersecting' means that there is no single-coordinate restriction $i' \rightarrow j'$ performed on both families).
    If $\mu(A) \cdot \mu(B) \ge e^{-c_3n}$, 
    then there exist non-restricted coordinates $i,j$, such that 
    \[\mathbb{E}[1\{A_{i\rightarrow j}\}] \ge \frac{\mathbb{E}[1_A]}{2}, \qquad \mathbb{E}[1\{B_{i\rightarrow j}\}]\ge \frac{\mathbb{E}[1_B]}{2}.
    \]
\end{lem}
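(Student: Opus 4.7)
\textbf{Proof plan for Lemma~\ref{lem:good_restriction}.} The plan is to apply the sharp level-$1$ inequality for global functions on $S_n$ (the $d=1$ case of Theorem~\ref{theorem:level-d for global functions_intro}) together with the explicit formula for $f^{=1}$ from Lemma~\ref{lem:level-1}, to both $1_A$ and $1_B$. This will show that for \emph{most} pairs of free coordinates $(i,j)$, the quantity $\mathbb{E}[1\{A_{i\to j}\}]$ is close to $\mu(A)$, and similarly for $B$; a union bound then produces a pair that is simultaneously good for both families.

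Concretely, first identify the ambient space of $A$ (resp.~$B$), namely the sub-permutation space $P^1_{n,k}$ further restricted by $I \to J$ (resp.~$P^2_{n,k}$ restricted by $I'\to J'$), with a genuine symmetric group $S_{n'_A}$ (resp.~$S_{n'_B}$), where $n'_A, n'_B \geq n - k - \max(|I|,|I'|) \geq 0.98 n$ by our hypotheses. In the regime $\mu(A)\leq e^{-8}$, Theorem~\ref{theorem:level-d for global functions_intro} applied inside $S_{n'_A}$ with $\gamma = 8$ and $d = 1$ yields
\[
\|1_A^{=1}\|_2^2 \;\leq\; C_1\, \mu(A)^2 \log(1/\mu(A)).
\]
In the complementary regime $\mu(A)>e^{-8}$ the trivial Parseval bound $\|1_A^{=1}\|_2^2 \le \|1_A\|_2^2 = \mu(A)\le e^8\mu(A)^2$ already suffices. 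Combining the two cases gives $\|1_A^{=1}\|_2^2 \le C_2\,\mu(A)^2 (1+\log(1/\mu(A)))$, and likewise for $B$.

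Next, invoke Lemma~\ref{lem:level-1} inside $S_{n'_A}$: writing $a_{ij}=(1-1/n'_A)(\mathbb{E}[1\{A_{i\to j}\}]-\mu(A))$ and using $\|1_A^{=1}\|_2^2 = \frac{1}{n'_A-1}\sum a_{ij}^2$, we obtain
\[
\sum_{(i,j)\text{ free for }A} \bigl(\mathbb{E}[1\{A_{i\to j}\}]-\mu(A)\bigr)^2 \;\leq\; 2C_2\, n\, \mu(A)^2 \bigl(1+\log(1/\mu(A))\bigr),
\]
and similarly for $B$. Call $(i,j)$ \emph{bad for $A$} if $\mathbb{E}[1\{A_{i\to j}\}]<\mu(A)/2$, so that the corresponding squared discrepancy is at least $\mu(A)^2/4$; Markov's inequality then bounds the number of pairs bad for $A$ by $8C_2\, n(1+\log(1/\mu(A)))$, and the same for $B$. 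Using $\log(1/(\mu(A)\mu(B)))\leq c_3 n$, the union of bad-for-$A$ and bad-for-$B$ pairs has size at most $O(n+c_3 n^2)$. On the other hand, the set of restricted row indices (for either family) has size at most $k+|I|+|I'|\le 3n/100$, and the set of restricted column indices has size at most $2k+|J|+|J'|\le 4n/100$ (here we use that $P_{n,k}^1$ and $P_{n,k}^2$ are non-intersecting, so their $k$ column-restricted indices are disjoint). Hence the number of pairs $(i,j)$ free for both families is at least $(0.97)(0.96)n^2\ge 0.93 n^2$. Taking $c_3$ small enough (depending only on $C_2$) and $n_0$ large enough yields a free pair that is good for both $A$ and $B$.

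\textbf{Main obstacle.} The only genuine subtlety is the regime $\mu(A)>e^{-8}$ (or $\mu(B)>e^{-8}$), where the hypothesis $d\le \frac{1}{8}\log(1/\mu(A))$ of Theorem~\ref{theorem:level-d for global functions_intro} fails at $d=1$; this is bypassed by falling back on the trivial Parseval bound $\|f^{=1}\|_2^2\le \|f\|_2^2$. The rest is careful bookkeeping: identifying the restricted ambient space with a genuine $S_{n'}$ so that Lemma~\ref{lem:level-1} applies verbatim, and checking that the set of restricted row/column indices is small enough that nearly all pairs $(i,j)\in[n]\times[n]$ are free for both families simultaneously.
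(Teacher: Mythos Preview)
Your proof is correct and follows essentially the same approach as the paper: both apply the $d=1$ case of the level-$d$ inequality (Theorem~\ref{theorem:level-d for global functions_intro}) combined with the explicit formula for $f^{=1}$ (Lemma~\ref{lem:level-1}) to show that most free pairs $(i,j)$ are ``good'' for each of $A$ and $B$ separately, then use a counting/union argument to find a common good pair. The only noteworthy difference is cosmetic---the paper phrases the second-moment step as Chebyshev on the random variable $X(i,j)=\mathbb{E}[g_{i\to j}]-\mathbb{E}[g]$ rather than Markov on the sum of squared deviations---and you handle the edge case $\mu(A)>e^{-8}$ (where the hypothesis of Theorem~\ref{theorem:level-d for global functions_intro} fails) explicitly via Parseval, which the paper glosses over.
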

\begin{proof}
     The proof of the lemma uses hypercontractivity for global functions over $S_n$. Let $A,B$ be families that satisfy the assumptions, for a sufficiently large value $n \geq n_0$ and a sufficiently small value $c_3$ (the exact requirement will be specified below).
     Let $T_A, T'_A$ be the sets of non-restricted input and output coordinates of $A$ and denote $n_A=|T_A|=|T'_A|$. By assumption, $n_A\geq n-\frac{n}{100}-\frac{n}{100}\geq \frac{49}{50}n$. Note that we may identify $A$ with a subset of $S_{n_A}$ (by changing names of the input and output possible values). Let $g:S_{n_A}\to\{0,1\}$ be the indicator of $A$, and for all $1\leq i,j \leq n_A$, let $g_{i \to j}$ be the indicator of $A_{i \to j}$. 
     
     Define the random variable $X:[n_A]^2 \to \mathbb{R}$ by $X(i,j)=\mathbb{E}[g_{i\to j}]-\mathbb{E}[g]$. It is clear that $\mathbb{E}[X]=0$, and hence we have
     \[
     \mathrm{Var}[X]=\mathbb{E}[X^2]=\frac{1}{n_A^2}\sum_{i,j}(\mathbb{E}[g_{i\to j}]-\mathbb{E}[g])^2.
     \]
     Recall that by Lemma~\ref{lem:level-1}, the first level of the degree decomposition of $g$ is equal to $\sum_{i,j}a_{ij}x_{i \to j}$, where $a_{ij}=(1-\frac{1}{n_A})(\mathbb{E}[g_{i\rightarrow j}]-\mathbb{E}[g])$, and we have $\|g^{=1}\|_2^2 = \frac{1}{n_A-1} \sum a_{ij}^2$. Thus, 
     \[
     \mathrm{Var}[X] = \frac{1}{n_A^2} \sum_{i,j} \left(\frac{n_A}{n_A-1}\right)^2 a_{ij}^2 = \frac{1}{(n_A-1)^2}\sum_{i,j}a_{ij}^2=\frac{1}{n_A-1}\|g^{=1}\|_2^2.
     \]
     By assumption, $g$ is $8$-global, and thus, by the level-$d$ inequality for global functions over the symmetric group (i.e., Theorem~\ref{theorem:level-d for global functions_intro}), we have $\|g^{=1}\|_2^2 \leq C\mathbb{E}[g]^2 \log \left(\frac{1}{\mathbb{E}[g]}\right)$, where $C$ is an absolute constant. Since by assumption, $\mu(A) \cdot \mu(B) \ge e^{-c_3n}$, we have $\log \left(\frac{1}{\mathbb{E}[g]}\right) \leq c_3n$, and hence, $\|g^{=1}\|_2^2 \leq Cc_3n\mathbb{E}[g]^2$. It follows that  
     \[
     \mathrm{Var}[X] = \frac{1}{n_A-1}\|g^{=1}\|_2^2 \leq \frac{Cc_3n\mathbb{E}[g]^2}{n_A-1}\leq 2Cc_3 \mathbb{E}[g]^2.
     \]
     Taking $c_3$ sufficiently small so that $2Cc_3 \leq \frac{1}{10}$, we get $\mathrm{Var}[X] \leq \frac{1}{10}\mathbb{E}[g]^2$. By Chebyshev's inequality, it follows that
     \[
     \Pr \left(|X-\mathbb{E}[X]| \ge \frac{1}{2}\mathbb{E}[g]\right) \le \frac{1}{25}, 
     \]
     and hence, a $\frac{24}{25}$ fraction of the values of $X$ are greater than $\frac{\mathbb{E}[g]}{2}$ (recall that $E[X] = E[g]$).
        
     This means that for a $\frac{24}{25}$ fraction of the values of $i \in T_A$ ,$j \in T'_A$, we have $\mathbb{E}[1\{A_{i\to j}\}]\geq \frac{1}{2}\mathbb{E}[1_A]$. By the same argument, if $T_B,T'_B$ denote the sets of non-restricted input and output coordinates of $B$, then for a  $\frac{24}{25}$ fraction of the values of $i' \in T_B$, $j' \in T'_B$, we have $\mathbb{E}[1\{B_{i'\to j'}\}]\geq \frac{1}{2}\mathbb{E}[1_B]$. 
     
     Note that the sets $T_A,T_B$ share at least $\frac{97}{100}n$ coordinates, as each of them is obtained from the same set of $\geq \frac{99}{100}n$ coordinates, which are the non-restricted input coordinates of $A,B$, by removing at most $\frac{1}{100}n$ coordinates. Similarly, the sets $(T'_A,T'_B)$ share at least $\frac{96}{100}n$ coordinates, as each of them is obtained from $[n]$ by removing at most $\frac{2}{100}n$ coordinates -- at most $\frac{1}{100}n$ in the restriction that created $P_{n,k}^1,P_{n,k}^2$ and at most $\frac{1}{100}n$ in the restriction $S \to x$ or $S' \to x'$, respectively. Hence, there exist $i \in T_A \cap T_B$ and $j \in T'_A \cap T'_B$ such that $\mathbb{E}[1\{A_{i\to j}\}]\geq \frac{1}{2}\mathbb{E}[1_A]$ and $\mathbb{E}[1\{B_{i\to j}\}]\geq \frac{1}{2}\mathbb{E}[1_B]$. This completes the proof.
\end{proof}

\subsection{Proof of Proposition~\ref{lem:non-densitybump}}

Assume to the contrary that for some $A, B$, the assertion of the proposition fails. As was explained above, we would like to move from $A,B$ to families which are cross $0$-intersection free, i.e., are cross $1$- intersecting. We perform the following process $t-1$ times:
\begin{enumerate}
    \item We construct from $A$ and $B$ $8$-global families 
    $A'',B''$ in a way that does not create intersections. The way to do this, explained below, consists of performing restrictions and removing at most half of the elements.
    
    \item We use Lemma~\ref{lem:good_restriction} to find $i,j$ such that we have $\mathbb{E}[1\{A''_{i\to j}\}]\geq \frac{1}{2}\mathbb{E}[1_{A''}]$ and 
    $\mathbb{E}[1\{B''_{i\to j}\}]\geq \frac{1}{2}\mathbb{E}[1_{B''}]$, and set $\bar{A}=A''_{i\rightarrow j}$, $\bar{B}=B''_{i\rightarrow j}$. We then rename $\bar{A},\bar{B}$ to $A,B$.
\end{enumerate}
Notice that each of the two steps in each iteration reduces the measure of the families by a factor of at most $2$, and hence, the measures of the families during the entire process satisfy $\mu(A) \cdot \mu(B) \ge e^{-c_4 \cdot n}$, for some constant $c_4$ that depends only on $c_1,c_2$. The families we obtain at the end of the process are global and cross $1$-intersecting. This essentially yields a contradiction to Proposition~\ref{lem:cross-intersecting-global} that completes the proof. Now we present the proof in detail.

\medskip \noindent \textbf{Constructing non intersecting global restrictions in the $\ell$'th iteration.}
We describe the $\ell$'th iteration of the process and show how the required global restrictions can be found. We may assume that we start with two cross $(t-\ell)$-intersection free families $A\subset P^1_{n,k}, B\subset P^2_{n,k'}$ without intersection in the restricted coordinates (except for common intersections created in Step~2 of the previous iterations, if there are such). These are either the families we started the proof with, or the families we got after Step~2 of the $(\ell-1)$'th iteration. Notice that either way, they do not have a density bump inside a dictatorship. Indeed, since the families $A'',B''$ obtained at Step~1 are 8-global, the families $\bar{A},\bar{B}$ obtained at Step 2 (renamed to $A,B$ at the beginning of the following iteration) are $2 \cdot 8^2$-global, and hence they satisfy $\mathbb{E}[\bar{A}_{i\rightarrow j}] \le 128 \mathbb{E}[\bar{A}]$ and $\mathbb{E}[\bar{B}_{i\rightarrow j}] \le 128 \mathbb{E}[\bar{B}]$. Thus, we have \begin{equation}\label{eq:large-sets}
    \mu(A) \cdot \mu(B) \ge e^{-c_4 \cdot n},    
    \end{equation}
    and for any $i,j$, 
\begin{equation}\label{eq:density-bump}
    |A_{i\rightarrow j}|\leq |A|\cdot \frac{k}{n},  \qquad   |B_{i\rightarrow j}|\leq |B|\cdot \frac{k}{n},
    \end{equation}
    for $k=\max\{m,128\}$. 
    
    First, we construct a 4-global restriction $A' = A_{S\rightarrow x}$ of $A$ in the way described in Section~\ref{sec:Preliminaries}.\footnote{In order to accommodate several types of restrictions in the same proof, we denote the restriction by $A_{S \to x}$ where $S,x$ are tuples of elements of $[n]$, instead of the usual notation $A_{I \to J}$.} 
    By~\eqref{Eq:g-global-forbidden2}, we have
    \begin{equation} \label{eq:calc}
        1 \ge \mu^{S^c}(A_{S\rightarrow x}) \ge \mu(A)\cdot 4^{|S|} \ge 4^{|S|} \cdot e^{-c_4 \cdot n},   
    \end{equation}
    which implies (after taking logarithms)
\begin{equation}\label{eq:size-s}
        |S| \le c_4\cdot n.
    \end{equation}
    On the side of $B$, we look at $B' = B_{S\rightarrow x^c} = \{\sigma \in B: \sigma|_S\neq x\}$. Note that we do not restrict $B$ to $B'$, but rather we view $B'$ inside the original space. (We do not want to restrict $B$ to $B'$, since the space in which such a restriction resides is not isomorphic to $S_{n'}$; it is only isomorphic to a union of such spaces, and hence is inconvenient to work with). We have 
    \begin{equation}
    |B'| \ge |B| - \sum_{i \in S} |B_{i\rightarrow x_i}| \ge |B|\cdot \left(1-\frac{k}{n}|S| \right)>\frac{|B|}{2},    
    \end{equation} 
    assuming $c_4$ is sufficiently small, as a function of $k=\max\{128,m\}$. (This can be guaranteed by taking $c_1,c_2$ sufficiently small, as a function of $m$).
    Notice that $A', B'$ are cross $(t-\ell)$-intersection free, and that $A'$ is $4$-global in the restricted space $P^1_{n,k+|S|}$.
    
    We then construct a $4$-global restriction $B'' = B'_{S' \rightarrow x'}$ of $B'$, and by the same argument as in~\eqref{eq:calc}, we have
    \begin{equation*}
        1 \ge \mu^{S'^c}(B'_{S'\rightarrow x'}) \ge \mu(B')\cdot 4^{|S|} \ge 4^{|S|} \cdot \frac{e^{-c_4 \cdot n}}{2},  
    \end{equation*}
    which gives us, for a sufficiently large $n$, 
\begin{equation}\label{eq:size-s'}
        |S'| \le c_5\cdot n,
    \end{equation}
    where $c_5$ depends only on $c_1,c_2$.

    On the side of $A'$, we look at $A'' = A'_{S'\rightarrow x'^c}$ (inside the original space, as above). Since $A'$ is $4$-global, we have
    \begin{equation}
    |A''| \ge |A'| - \sum_{i \in S'} |A'_{i\rightarrow x'_i}| \ge |A'|\cdot \left(1-\frac{4}{n}|S'| \right)>\frac{|A'|}{2}, 
    \end{equation}
    assuming $c_5$ is sufficiently small.
    Notice that $A'', B''$ are cross $(t-\ell)$-intersection free, that $A''$ is $8$-global in the space $P^1_{n,k+|S|}$ (since $A'$ is $4$-global in the same space and $|A''|\geq \frac{|A'|}{2}$), and that $B''$ is $4$-global in $P_{n,k'+|S'|}^2$.

\medskip \noindent \textbf{Applying the core lemma in the $\ell$'th iteration.} At this stage, we would like to apply Lemma~\ref{lem:good_restriction} to the $8$-global families $A'',B''$. For this, we have to prove that the number of coordinates that were restricted up to this stage is at most $\frac{n}{100}$. 

To show this, denote by $k_i$ the number of coordinates of $A$ which we restrict at Step~1 of the $i$'th iteration (for all $i \le t$). The measure of the family $A$ after $\ell$ steps is at least 
\[e^{-c_2 \cdot n} \cdot 4^{\sum_{i=1}^\ell k_i} 4^{-\ell}.
\] 
Indeed, the first term is the initial measure, each time we take a global restriction in Step~1, the measure increases by a factor of at least $4^{k_i}$, and then we lose a factor of at most $2$ in the measure by removing part of the elements when passing from $A'$ to $A''$ and another factor of at most $2$ each time Step 2 is performed. Since the measure cannot exceed $1$, substituting $\ell \le t \le c_1 n$ we get 
\[
\sum_{i=1}^\ell k_i \le c_6 n,
\]
where $c_6$ is a constant that depends only on $c_1,c_2$. Thus, for sufficiently small $c_1,c_2$, the number of restricted coordinates is indeed at most $\frac{n}{100}$.

Therefore, the families $A'',B''$ satisfy the assumptions of Lemma~\ref{lem:good_restriction}. By the lemma, there exist non-restricted coordinates $i,j$, such that $\mathbb{E}[1\{A''_{i\rightarrow j}\}] \ge \frac{\mathbb{E}[1_{A''}]}{2}$ and $\mathbb{E}[1\{B''_{i\rightarrow j}\}]\ge \frac{\mathbb{E}[B'']}{2}$. We define $\bar{A}=A''_{i \to j}$ and $\bar{B}=B''_{i \to j}$. Note that $\bar{A},\bar{B}$ are $128$-global. We rename these families to $A,B$ and begin with them the $(\ell+1)$'th iteration.

\medskip \noindent \textbf{Applying Proposition~\ref{lem:cross-intersecting-global} and completing the proof.}
At the end of the $t-1$ iterations, we have two $128$-global cross $0$-intersection free families $\bar{A},\bar{B}$, that satisfy $\mu(\bar{A}) \cdot \mu(\bar{B}) \ge e^{-c_7 \cdot n}$, where $c_7$ depends only on $c_1,c_2,m$.
We would like to apply to these families Proposition~\ref{lem:cross-intersecting-global}.  
However, these families reside in different spaces:
$\bar{A}$ resides in $(S_n)_{K\rightarrow K_1', S \rightarrow x, T\rightarrow T'}$ and $\bar{B}$ resides in $(S_n)_{K\rightarrow K_2', S' \rightarrow x', T\rightarrow T'}$, where $K \to K_1'$ and $K \to K_2'$ denote the initial restriction from $S_n$ to $P^1_{n,k},P^2_{n,k}$ we began with, $S \to x$ and $S' \to x'$ denote the non-intersecting restrictions we performed in Steps~1 of the iterations, and $T \to T'$ denotes the common restrictions we performed in Steps~2 of the iterations. 
We would like to move the families into the same space.

Assume w.l.o.g.~that $|S| \le |S'|$.  We replace $\bar{B}$ by 
a family $\bar{B}'$, by performing a shifting of the coordinates in $\bar{B}$ (i.e., calling them by different names, or conjugating by a permutation), such that 
\[K \cup S' \rightarrow K \cup S \cup R, \qquad K_2'\cup x' \rightarrow K_1' \cup x \cup R,
\]
for some set $R$. Note that we may choose $c_1,c_2$ to be sufficiently small so that  $r = |R| \le c_0 \cdot \frac{n}{2}$, where $c_0$ is the constant from Proposition~\ref{lem:cross-intersecting-global}. 
 
The transformation from $\bar{B}$ to $\bar{B}'$ does not decrease the intersection size between any element of $\bar{A}$ and any element of $\bar{B}$, and thus, the resulting families $\bar{A}$ and $\bar{B}'$ are cross-intersecting. The family $\bar{A}$ resides in a space isomorphic to $S_{n-t-|K|-|S|}$, where $n-t-|K|-|S| \ge n/2$, and the family $\bar{B}'$ resides in a subspace of it, isomorphic to $S_{n-t-|K|-|S|-r}$. The families remain $128$-global (each in its space), since renaming coordinates does not affect globalness. 
Hence, Proposition~\ref{lem:cross-intersecting-global} (applied to $\bar{A},\bar{B}'$) yields a contradiction. 

\section{Proof of the Main Theorem}

In this section we prove Theorem~\ref{thm:main}. Specifically, we prove by induction the following proposition, which includes the assertion of Theorem~\ref{thm:main} as the case $m=0$. 
\begin{prop} \label{Prop:main-forbidden}
There exists $\alpha>0$ such that the following holds for any $t \in \mathbb{N}$, for any $n>  \alpha t\log(t)$, and for any $0 \leq m \leq n-t$.
Let $P^1_{n,m}, P^2_{n,m}$ be non-intersecting sub-permutation spaces of $S_n$ and let $A\subset P^1_{n,m}, B\subset P^2_{n,m}$ be cross $(t-1)$-intersection free families. 
Then we have
$$|A||B| \le 4^{m}(n-m-t)!^2.$$ 
\end{prop}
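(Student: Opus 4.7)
\emph{Plan.} I would prove the proposition by induction on $t$ (with $n$ and $m$ as further parameters), strengthening the induction hypothesis by a stability clause analogous to Theorem~\ref{thm:stability}: whenever $|A||B|$ is close to $4^m(n-m-t)!^2$, the families $A$ and $B$ are essentially contained in a common $t$-umvirate $(S_n)_{I \to J}$ with $I$ disjoint from the coordinates restricted by $P^1_{n,m}, P^2_{n,m}$. Carrying this stability clause alongside the main inequality is what will allow us to close the recursion.

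For the base case $t = 1$, the families $A, B$ are cross $1$-intersecting in the non-intersecting $P^1_{n,m}, P^2_{n,m}$, and the bound $|A||B| \le 4^m (n-m-1)!^2$ follows from Proposition~\ref{lem:cross-intersecting-global} after identifying each space with a copy of $S_{n-m}$, with the non-intersecting structure of the $m$ prior restrictions comfortably absorbed into the factor $4^m$.

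For the inductive step, I would fix a sufficiently large universal constant $\mathfrak{m}$ and let $c_2$ be the corresponding constant from Proposition~\ref{lem:non-densitybump}. If $\mu(A)\mu(B) \le e^{-c_2 n}$, a direct computation using $n \ge \alpha t \log t$ and $t \le c_1 n$ yields $|A||B| \le e^{-c_2 n}((n-m)!)^2 \le 4^m(n-m-t)!^2$ with considerable slack. Otherwise, Proposition~\ref{lem:non-densitybump} supplies coordinates $(i, j)$ outside the restricted set with $|A_{i \to j}| \ge \mathfrak{m}|A|/n$ and $|B_{i \to j}| \ge \mathfrak{m}|B|/n$. After identifying $(P^1_{n,m})_{i \to j}, (P^2_{n,m})_{i \to j}$ with non-intersecting sub-permutation spaces of $S_{n-1}$ by factoring out the shared restriction $i \to j$, the restricted families become cross $(t-2)$-intersection free in $(n-1, m)$-dimensional non-intersecting spaces, and the induction hypothesis at parameters $(n-1, t-1, m)$ applies.

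The main obstacle, and the point where the stability clause is decisive, is that a naive recursion only yields $|A||B| \le (n/\mathfrak{m})^2 \cdot 4^m(n-m-t)!^2$, which overshoots the target by a factor of $(n/\mathfrak{m})^2$. The resolution is to split on whether $|A_{i \to j}| |B_{i \to j}|$ falls above or below the stability threshold from the inductive hypothesis: in the near-extremal regime the stability clause pins $A_{i \to j}, B_{i \to j}$ inside a $(t-1)$-umvirate of $S_{n-1}$, which together with $i \to j$ gives a $t$-umvirate that captures most of $A, B$, and iterating the density bump within the same inductive step forces containment up to negligible error so that the bound follows directly from containment; in the complementary below-threshold regime the remaining slack absorbs the $(n/\mathfrak{m})^2$ loss. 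Calibrating the stability threshold and the constant $\mathfrak{m}$ so that both cases propagate consistently through the recursion, while verifying along the way that the number of restrictions $m$ never exceeds $n - t$, is the technically involved part of the proof, matching the description in the proof overview.
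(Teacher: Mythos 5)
There is a genuine gap, and it sits exactly at the point you flag as ``the technically involved part.'' Your two-case splitting scheme cannot absorb the $(n/\mathfrak{m})^2$ loss. In the below-threshold regime, the slack that is available is of order $1/(100t)^t$ (the size of the stability window of Theorem~\ref{thm:stability}), which is vastly smaller than the factor $(n/\mathfrak{m})^2$ you need to recover; the inequality $(n/\mathfrak{m})^2\bigl(1-1/(100t)^t\bigr)\le 1$ fails badly for all relevant $t$ and $n$. In the above-threshold regime, the inductive stability clause tells you that $A_{i\to j}$ and $B_{i\to j}$ sit in a common $(t-1)$-umvirate, hence $A_{i\to j}\subset U'$ for a $t$-umvirate $U'$, but this says nothing about the bulk of $A$: the hypothesis you start from is only $|A_{i\to j}|\ge \mathfrak{m}|A|/n$, so $A$ may have a $(1-\mathfrak{m}/n)$-fraction of its mass outside the dictator $i\to j$, and ``most of $A$'' is not captured. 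Neither branch closes, and ``iterating the density bump within the same inductive step'' does not rescue it, because each iteration only costs you another $n/\mathfrak{m}$.

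The idea you are missing is the bootstrapping step that the paper isolates as its core lemma (Lemma~\ref{lemma:all-meassure-in-dictator}): under the extremality assumption $|A||B|\ge\tfrac{2}{3}\cdot 4^m(n-m-t)!^2$, the weak density bump $|A_{1\to i}|\ge k|A|/n$ from Proposition~\ref{lem:non-densitybump} can be upgraded all the way to $|A_{1\to i}|\ge (1-48/n)|A|$, and similarly for $B$. This is not a consequence of the density bump alone; it comes from applying the induction hypothesis to the pairs $(A_{1\to i}, B_{1\to j})$ for each $j\ne i$ (these are still cross $(t-1)$-intersection free, and live in $P^1_{n,m+1},P^2_{n,m+1}$, so the backward induction on $m$ is exactly what is needed here), summing over $j\ne i$, and playing this off against the lower bound on $|A||B|$ to get a quadratic inequality in the density $a=n|A_{1\to i}|/|A|$ that forces $a\ge n-48$. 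With the $(1-48/n)$ concentration in hand, iterating $t$ times costs only $(1-48/n)^t\ge 1-1/1000$ (here $t\le cn/\log n$ enters), not $(n/\mathfrak{m})^t$, and this is what lands the families essentially inside a $t$-umvirate. This bootstrapping is precisely the reason the paper runs a backward induction on $m$ in addition to the outer induction on $t$ --- the statement at $(t,n,m+1)$ is used inside the proof of the core lemma at $(t,n,m)$.

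Two further, smaller remarks. First, your base case $t=1$ does not follow from Proposition~\ref{lem:cross-intersecting-global} as stated: that proposition says that two cross $1$-intersecting families which are both $\gamma$-global cannot coexist, but an arbitrary cross $1$-intersecting pair need not be global, and deducing a size bound requires the globalization-plus-density-bump machinery again; the paper instead takes the trivial base case $m\ge t\log n$ and lets the inner induction carry $t=1$ through the inductive step. Second, the case $m=0$ genuinely needs separate treatment: the slack factor $4^m$ degenerates to $1$, so the $(1-1/1000)$ loss from the iteration is no longer absorbable, and the paper closes this by a separate counting argument (Lemma~\ref{lem:cross-intersects-one} and the subsequent summation over $t$-umvirates at each Hamming distance $r$) to recoup a multiplicative factor of $1-\Theta(1/(2r)^r)$; your outline does not distinguish $m=0$ from $m\ge 1$.
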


\begin{proof}
We prove the theorem by a double induction: An outer induction on $t$ and an inner backward induction on $m$ for given values of $t,n$, starting with $m=n-t$ and ending with $m=0$.


\medskip \noindent \textbf{Induction basis.} The basis of the induction is all the cases ($n>\alpha(t), m\ge t\log(n)$). In these cases, the claim holds trivially, since we clearly have
\[
|A||B|\leq ((n-m)!)^2 \leq 4^{t\log n}((n-m-t)!)^2 \leq 4^m ((n-m-t)!)^2.
\]

\subsection{A core lemma} 

The following core lemma asserts that under the induction assumption, the assertion of Proposition~\ref{lem:non-densitybump} that any two large cross $(t-1)$-intersection free families have a density bump inside a dictatorship, can be strengthened to the claim that any two such families are \emph{almost included} in the same dictatorship. 
\begin{lem}\label{lemma:all-meassure-in-dictator}
There exists $\alpha>0$ such that the following holds.
Let $t \in \mathbb{N}$,  $n> \alpha t\log(t)$, and $m \leq t\log n$ be fixed. Assume that the statement of Proposition~\ref{Prop:main-forbidden} holds for $n,t,$ and all $m'>m$. 
Let $P^1_{n,m}, P^2_{n,m}$ be non-intersecting sub-permutation spaces of $S_n$ and let $A\subset P^1_{n,m}, B\subset P^2_{n,m}$ be cross $(t-1)$-intersection free families.
If
\begin{equation}\label{Eq:large-sets-41}
|A||B| \ge 4^m\cdot\frac{2}{3}\cdot (n-m-t)!^2,    
\end{equation} 
then there exist $i,j$, s.t 
\begin{equation}\label{Eq:Almost-inside-dictatorship}
    |A_{i \rightarrow j}| \ge |A| \left(1-\frac{48}{n}\right) \qquad \mbox{and} \qquad  |B_{i \rightarrow j}| \ge |B|\left(1-\frac{48}{n}\right). 
\end{equation}
\end{lem}
\begin{proof} 
For a sufficiently large $\alpha$, we have $m \leq \frac{n}{100}$, $t \leq c_1n$, and $\mu(A)\mu(B)\geq e^{-c_2n}$, where $c_1,c_2$ are the constants of Proposition~\ref{lem:non-densitybump}. (Note that the measures $\mu(A),\mu(B)$ are taken with respect to the spaces $P_{n,m}^1,P_{n,m}^2$ these families reside in. Hence, $\mu(A)\mu(B)=\frac{|A||B|}{(n-m)!^2}\geq e^{-c_2n}$). Hence, by Proposition~\ref{lem:non-densitybump}, at least one of the families $A,B$ has a density bump inside a dictatorship. Formally, we may assume w.l.o.g.~that $A$ satisfies 
\begin{equation}\label{Eq:a-ge-k}
|A_{1\rightarrow i}| = \frac{a\cdot|A|}{n}, \qquad \mbox{where} \qquad a \ge k,
\end{equation}
for a value of $k$ that we will choose later. (The constants $c_1,c_2$, and in turn, the constant $\alpha_0$, depend on the choice of $k$). The families $A_{1\rightarrow i}$ and $\cup_{j \neq i}(B_{1\rightarrow j})$ are cross $(t-1)$-intersection free, and the families $A_{1\rightarrow i}$ and $B_{1\rightarrow j}$ 
are cross $(t-1)$-intersection free as well.  
The latter families reside in $P_{n,m+1}^1$ and in $P_{n,m+1}^2$, respectively, and hence, we can apply to them the induction hypothesis, to get
\begin{equation}\label{Eq:in}
\frac{a}{n}|A||\cup_{j \neq i}(B_{1\rightarrow j})|=|A_{1\rightarrow i}||\cup_{j \neq i}(B_{1\rightarrow j})| \le (n-1) \cdot 4^{m+1}\cdot (n-t-m-1)!^2.
\end{equation}
(Note that we cannot apply the induction hypothesis to $A_{1\rightarrow i}$ and $\cup_{j \neq i}(B_{1\rightarrow j})$, since the family $\cup_{j \neq i}(B_{1\rightarrow j})$ does not reside in $P_{n,m+1}^2$).
By ~\eqref{Eq:in}, we have
\begin{align*}
\begin{split}
    |A|\cdot|B_{1\rightarrow i}|+&\frac{n}{a}\cdot4^{m+1}(n-1)\cdot(n-t-m-1)!^2 \\&\ge|A|\cdot (|\cup_{j \neq i}(B_{1\rightarrow j})|+|B_{1\rightarrow i}|) \ge |A|\cdot|B| \ge \frac{2}{3}\cdot 4^{m}(n-t-m)!^2.
\end{split}
\end{align*}
Hence, we get
\begin{equation}\label{Eq:B-density}
    |B_{1\rightarrow i}| \ge \frac{4^{m}(n-t-m-1)!^2\cdot(\frac{2}{3}\cdot(n-t-m)^2-\frac{4n(n-1)}{a})}{|A|}.
\end{equation}
By the same argument as above, we have 
\[|B_{1\rightarrow i}||\cup_{j \neq i}(A_{1\rightarrow j})| \le 4^{m+1}(n-1) \cdot (n-t-m-1)!^2.
\]
Combining with ~\eqref{Eq:B-density}, we obtain
\begin{align*}
    |\cup_{j \neq i}(A_{1\rightarrow j})| \le \frac{4^{m+1}(n-1) \cdot (n-t-m-1)!^2\cdot|A|}{4^{m}(n-t-m-1)!^2\cdot(\frac{2}{3}\cdot(n-t-m)^2-\frac{4n(n-1)}{a})},
\end{align*}
and hence, 
\[
\frac{|\cup_{j \neq i}(A_{1\rightarrow j})|}{|A|} \le \frac{4(n-1)}{\frac{2}{3}\cdot(n-t-m)^2-\frac{4n(n-1)}{a}},
\]
which is equivalent to
\[
\frac{a}{n} = \frac{|A_{1\rightarrow i}|}{|A|} = 1 -\frac{|\cup_{j \neq i}(A_{1\rightarrow j})|}{|A|} \ge 1 - \frac{4(n-1)}{\frac{2}{3}\cdot(n-t-m)^2-\frac{4n(n-1)}{a}}.
\]
We write this as a quadratic inequality in the variable $a$ and solve it to get
\begin{align*}
    a \ge \frac{(n-t-m)^2  + \sqrt{(n-t-m)^4 - 24(n-1)(n-t-m)^2}}{2\frac{(n-t-m)^2}{n}},
\end{align*}
or
\begin{align*}
      a \le \frac{(n-t-m)^2  - \sqrt{(n-t-m)^4 - 24(n-1)(n-t-m)^2}}{2\frac{(n-t-m)^2}{n}}.
\end{align*}
In the second option, as $m\leq t\log n$, we can take $\alpha$ to be sufficiently large so that $n-t-m \geq \frac{n-1}{2}$, and get
\begin{align*}
    a &\le \frac{n-t-\sqrt{(n-t-m)^2-24(n-1)}}{2\frac{n-t-m}{n}} =\frac{n}{2}-\frac{n}{2}\cdot \sqrt{1-24\frac{n-1}{(n-t-m)^2}} \\&\leq \frac{n}{2} \left(1-\sqrt{1-\frac{48}{n-1}}\right) \leq \frac{n}{2}\left(1-\left(1-\frac{48}{n-1}\right)\right) \le 48,
\end{align*}
 contradicting the assumption~\eqref{Eq:a-ge-k} for any $k>48$.

In the first option, by the same argument we get
\begin{align*}
    a &\ge \frac{n-t+\sqrt{(n-t-m)^2-24(n-m-1)}}{2\frac{n-t-m}{n}} \\&= \frac{n}{2}+\frac{n}{2}\cdot \sqrt{1-24\frac{n-1}{(n-t-m)^2}} \geq n-48.
\end{align*}
Therefore, we have 
\[\frac{|A_{1\rightarrow i}|}{|A|}\geq 1-\frac{48}{n}.
\]

The same holds also for  $B_{1\rightarrow i}$. Indeed, 
by combining~\eqref{Eq:large-sets-41} and~\eqref{Eq:in}, and using the inequality $n-t-m \geq \frac{n-1}{2}$, we get
\begin{align*}
    1 - \frac{|B_{1\rightarrow i}|}{|B|} &= \frac{|\cup_{j \neq i}(B_{1\rightarrow j})|}{|B|} \le \frac{(n-1)4^{m+1}(n-t-m-1)!^2}{ (1-\frac{48}{n})4^{m} \cdot\frac{2}{3}(n-t-m)!^2} \\ &=\frac{6n(n-1)}{(n-48)(n-t-m)^2} 
    \le \frac{12n}{(n-48)(n-1)} \leq \frac{48}{n},
\end{align*}
where the last inequality holds assuming $n\geq 100$. Therefore, we have 
\[
\frac{|B_{1\rightarrow i}|}{|B|} \geq 1-\frac{48}{n},
\]
as asserted. 
\end{proof}

\subsection{Induction step}  

The induction step is divided into two sub-steps:
\begin{enumerate}
    \item We prove that the statement holds for all $m \geq 1$. At this step, we want to show that the assertion holds for the triple $(t,n,m)$ where $m \geq 1$, under the assumption that it holds for all triples $(t',n',m')$ such that either $(1\leq t'<t) \wedge (m'\geq 1)$ or $(t'=t) \wedge (n'=n) \wedge (m'>m)$. 

    \item We prove that the statement holds for $m=0$. At this step, we want to show that the assertion holds for the triple $(t,n,0)$, under the assumption that it holds for all triples $(t',n',m')$ such that either $(1\leq t'<t)$ or $(t'=t) \wedge (n'=n) \wedge (m'>0)$. 
\end{enumerate}
It is clear that the combination of the two steps, together with the induction basis, proves the assertion. Throughout the proof, we take $\alpha$ to be sufficiently large, so that the core lemma holds for all $n \geq \alpha t \log(t)$.

\medskip \noindent \textbf{Step~1: The case $m \geq 1$.} Assume to the contrary that the assertion fails for $(t,n,m)$ -- i.e., that there exist cross $(t-1)$-intersection free families $A\subset P^1_{n,m}, B\subset P^2_{n,m}$, 
such that
$$|A||B| > 4^{m}(n-m-t)!^2.$$ 
The families $A,B$ satisfy the assumptions of Lemma~\ref{lemma:all-meassure-in-dictator}, and hence, by~\eqref{Eq:Almost-inside-dictatorship} we have $\min(\frac{|A_{1\rightarrow i}|}{|A|}, \frac{|B_{1\rightarrow i}|}{|B|}) \ge 1-\frac{48}{n}$ (where the fixation of the restriction to $1 \to i$ is w.l.o.g.).  

$A_{1\rightarrow i}$ and $B_{1\rightarrow i}$ are cross $(t-2)$-intersection free families that reside in spaces isomorphic to $P^1_{n-1,m}, P^2_{n-1,m}$. We would like to apply Lemma~\ref{lemma:all-meassure-in-dictator} to these restrictions. They indeed satisfy the assumption of the lemma, as by the induction assumption, the assertion of Proposition~\ref{Prop:main-forbidden} is satisfied for all triples of the form $(t-1,n,m')$ where $m'>m$ and as we have 
 \begin{equation*}
 |A_{1\rightarrow i}|\cdot |B_{1\rightarrow i}| > 4^{m}(n-t-m)!^2 \left(1-\frac{48}{n} \right)^2 \ge \frac{2}{3}\cdot 4^{m}(n-t-m)!^2,    
 \end{equation*}
where the last inequality holds assuming $n\geq 300$. Therefore, by Lemma~\ref{lemma:all-meassure-in-dictator}, the families  $A_{1\rightarrow i}, B_{1\rightarrow i}$ are almost contained in the same dictatorship. Since for a sufficiently large $\alpha$ and for all $n \geq \alpha t \log(t)$, we have $(1-\frac{48}{n})^t \ge 1-\frac{1}{1000} > \frac{2}{3}$, we can repeat this step $t$ times, and 
deduce that each of the families
$A,B$ has at least $1-\frac{1}{1000}$ of its measure inside the restriction $1\rightarrow i_1, 2\rightarrow i_2,...,t\rightarrow i_t$. In particular, we have 
\[
|A||B|\leq (n-t-m)!^2 \cdot \left(\frac{1000}{999}\right)^2 \leq 4^m (n-t-m)!^2,
\]
where the second inequality holds since $m \geq 1$. This completes the proof of the first step.

\medskip \noindent \textbf{Step~2: The case $m=0$.}  
Assume to the contrary that the assertion fails for some families $A,B$ with respect to the parameters $(t,n,0)$. Note that here, we have $A,B \subset S_n$ (since $m=0$). The above argument can be applied verbatim in this case as well, to deduce that each of the families
$A,B$ has at least $1-\frac{1}{1000}$ of its measure inside the $t$-umvirate $U:= \{\sigma \in S_n: \sigma(1)=i_1, \sigma(2)=i_2,\ldots,\sigma(t)=i_t\}$, and in particular, we have
\[|A||B|\leq (n-t)!^2 \cdot \left(\frac{1000}{999}\right)^2.
\]
This upper bound is however not sufficiently strong
and we shall obtain a stronger inequality by bounding the measure of $A,B$ outside of $U$. 

Note that if both $A,B$ are included in $U$, then $|A||B| \leq |U|^2 = (n-t)!^2$ and we are done. Hence, we may assume w.l.o.g.~that there exists a $t$-umvirate 
$U':= \{\sigma \in S_n: \sigma(1)=j_1, \sigma(2)=j_2,\ldots,\sigma(t)=j_t\}$, such that $A \cap U' \neq \emptyset$, where $|\{\ell:i_\ell=j_\ell\}|=t-r$ for some $0<r \leq t$. We shall need the following lemma.
\begin{lem}\label{lem:cross-intersects-one}
   Let $n \geq 20$, let $r\leq \frac{n}{6}$, and let $j\leq r$. Let $\sigma \in S_n$ and let $U= \{\tau \in S_n: \tau(i_1)=i'_1, \tau(i_2)=i'_2,\ldots,\tau(i_r)=i'_r\}$ be an $r$-umvirate that disagrees with $\sigma$ on all its $r$ coordinates. Then the number of permutations $\tau \in U$ such that $|\sigma \cap \tau|\neq j$ is at most 
   \[
   \left(1-\frac{1}{4\cdot 2^j \cdot j!}\right) \left(n-r \right)!.
   \]
\end{lem}

\begin{proof}
    We may assume w.l.o.g.~that $\sigma$ is the identity permutation, that $\{i_1,\ldots,i_r\}=[r]$, and that $U$ sends $[r]$ to $\{r+1,\ldots,2r\}$. We shall bound from below the probability that $\tau \in U$ intersects $\sigma$ on exactly $j$ coordinates, which we denote by $f(n,r,j)$. We claim that  
    \[
    f(n,r,j)=\frac{{n-2r \choose j} \cdot (f(n-j,r,0) \cdot (n-r-j)!)}{(n-r)!}. 
    \]
    Indeed, we have only ${n-2r \choose j}$ possible ways to choose the $j$ fixed points of $\tau$, since $r+1,r+2,\ldots,2r$ cannot be fixed points, being already used as images of elements in $[r]$. Once the fixed points are picked, they can be thrown out and the problem reduces to the setting $(n-j,r,0)$. Hence, the number of such permutations for each choice of the fixed points is $f(n-j,r,0) \cdot (n-r-j)!$, and consequently, the overall probability $f(n,r,j)$ is as stated.
    
    We now claim that $f(n',r,0) \geq \frac{1}{4}$ assuming $n'-r \geq 10$, an assumption that is satisfied for $n'=n-j$. Indeed, as for any $\tau \in U$, the elements in $\{r+1,\ldots,2r\}$ cannot be fixed points, we can assign the other elements and compute the probability that they don't have a fixed point. Hence, we can treat $\tau$ as an injection from $\{2r+1,\ldots,n'\}$ to $[r] \cup \{2r+1,\ldots,n'\}$, complete the injection to a permutation over the set $[r] \cup \{2r+1,\ldots,n'\}$ in a random way, and compute the probability that the resulting permutation does not have a fixed point inside the set $\{2r+1,\ldots,n'\}$. This probability is clearly lower bounded by the probability that a permutation over a set of $n'-r$ elements is a derangement, which is $\approx \frac{1}{e}$, and in particular, is at least $\frac{1}{4}$ if $n'-r \geq 10$. 

    Therefore, we have
    \[
    f(n,r,j)=\frac{{n-2r \choose j} \cdot (f(n-j,r,0) \cdot (n-r-j)!)}{(n-r)!} \geq \frac{(n-2r-j)^j}{4j!(n-r)^j} \geq \frac{1}{4\cdot 2^j \cdot j!}, 
    \]
    where the last inequality holds since $j \leq r \leq \frac{n}{6}$. This completes the proof of the lemma.
\end{proof}

Let $U':= \{\sigma \in S_n: \sigma(1)=j_1, \sigma(2)=j_2,\ldots,\sigma(t)=j_t\}$ be a $t$-umvirate distinct from $U$ such that $A \cap U' \neq \emptyset$. Denote $|\{\ell:i_\ell=j_\ell\}|=t-r$, for some $0<r \leq t$. 
It follows from the lemma, applied with $(n-(t-r),r,r-1)$ in place of $(n,r,j)$, that 
\[
|B \cap U| \leq \left(1-\frac{1}{4\cdot 2^{r-1} \cdot (r-1)!}\right)(n-t)!,
\]
as for each $\sigma \in B \cap U$ and $\tau \in A \cap U'$ we have $|\sigma \cap \tau|\neq t-1$. As $4 \cdot 2^{r-1} \cdot (r-1)! \leq 2 \cdot 2^r r^r$, we have
\begin{equation}\label{eq:temp0}
|B \cap U| \leq \left(1-\frac{1}{2 \cdot (2r)^r}\right)(n-t)!.
\end{equation}
We would like to bound $|A \cap U'|$ using the induction hypothesis. Note that the families 
$A \cap U' = \{\sigma \in A: \sigma(1)=j_1, \sigma(2)=j_2,\ldots,\sigma(t)=j_t\}$
and
$B \cap U = \{\sigma \in B: \sigma(1)=i_1, \sigma(2)=i_2,\ldots,\sigma(t)=i_t\}$
are cross $(r-1)$-intersection free. Hence, due to the induction hypothesis (which covers both the case $(t',n',m')$ where $t'<t$ that we obtain for $r<t$, and the case $(t',n',m')$ where $t'=t, n'=n, m'>m$ that we obtain for $r=t$), we can apply Proposition~\ref{Prop:main-forbidden} with $(t',n',m')=(r,n-(t-r),r)$ in place of $(t,n,m)$ to get that 
\begin{equation}\label{eq:temp1}
    |A \cap U'|\cdot|B \cap U| \le 4^{m'}\cdot (n'-m'-t')!^2 = 4^r(n-t-r)!^2. 
\end{equation}
Note that we have $|B \cap U| \geq \frac{1}{2}(n-t)!$, since otherwise, 
\[
|A||B| \leq |A\cap U| |B \cap U|\cdot \left(\frac{1000}{999}\right)^2 < (n-t)!^2.
\]
Therefore, 
\begin{equation}\label{eq:temp1.1}
    |A \cap U'| \le \frac{2 \cdot 4^r(n-t-r)!^2}{(n-t)!}. 
\end{equation}
Now, we would like to bound from above $\sum_{U'} |A \cap U'|$, where the sum is over all $t$-umvirates $U' \neq U$. For every fixed $r$, the number of $t$-umvirates $U' = 
\{\sigma \in S_n: \sigma(1)=i_1, \sigma(2)=i_2,\ldots,\sigma(t)=i_t\}$
such that $|\{\ell:i_\ell=j_\ell\}|=t-r$, is 
\[
{t \choose t-r} {n-t \choose r} \leq \frac{t^r}{r!} \cdot \frac{n^r}{r!} \leq \frac{t^r n^r e^{2r}}{r^{2r}}=\left(\frac{e^2 tn}{r^2}\right)^r.
\]
Hence, denoting 
\[
\mathcal{U}'_{t-r}= \{U' =\{\sigma \in S_n: \sigma(1)=j_1, \sigma(2)=j_2,\ldots,\sigma(t)=j_t\}: |\{\ell:i_\ell=j_\ell\}|=t-r\},
\]
we have, for all $0<r \leq t$,
\[
\sum_{U' \in \mathcal{U}_{t-r}} |A \cap U'| \leq (n-t)!\left(\frac{e^2 tn}{r^2}\right)^r \cdot \frac{2 \cdot 4^r(n-t-r)!^2}{(n-t)!^2} \leq (n-t)! \left(\frac{8e^2 tn}{(n-t-r)^2 r^2}\right)^r. 
\]
For a sufficiently large $\alpha$, we have $\frac{8e^2 tn}{(n-t-r)^2}\leq \frac{1}{8}$ for all $n \geq \alpha t \log(t)$ and all $r \leq t$. Hence, 
\[
\sum_{U' \in \mathcal{U}_{t-r}} |A \cap U'| \leq (n-t)! \left(\frac{8e^2 tn}{(n-t-r)^2 r^2}\right)^r \leq (n-t)! \left(\frac{1}{8r^2}\right)^r. 
\]
Summing over all possible values of $r$, and denoting by $r_1$ be the minimal $r$ for which there exists $U' \in \mathcal{U}_{t-r}$ such that $A \cap U' \neq \emptyset$, we obtain 
\[
|A \setminus (A\cap U)| = 
\sum_{r=r_1}^t \sum_{U' \in \mathcal{U}_{t-r}} |A \cap U'| \leq \sum_{r=r_1}^{t} (n-t)! \left(\frac{1}{8r^2}\right)^r \leq (n-t)! \cdot 2 \left(\frac{1}{8r_1^2}\right)^{r_1}.
\]
Combining with~\eqref{eq:temp0}, we get
\begin{equation}\label{eq:temp2}
    |A \setminus (A\cap U)| \leq 2 \left(\frac{1}{8r_1^2}\right)^{r_1}(n-t)!, \qquad |B\cap U| \leq \left(1-\frac{1}{2 \cdot (2r_1)^{r_1}}\right)(n-t)!. 
\end{equation}
To conclude the argument, we consider two cases.
\begin{enumerate}
    \item \textbf{Case~1: $B \subset U$.}
    As in this case we have $B=B\cap U$, and as clearly, $|A \cap U|\leq (n-t)!$,~\eqref{eq:temp2} yields
    \begin{align*}
    |A||B|&=(|A \cap U|+|A\setminus (A\cap U)|)(|B \cap U|) \\
    &\leq \left(1+2 \left(\frac{1}{8r_1^2}\right)^{r_1}\right)(n-t)! \left(1-\frac{1}{2 \cdot (2r_1)^{r_1}}\right)(n-t)! \\
    &\leq (n-t)!^2 \left(1+2 \left(\frac{1}{8r_1^2}\right)^{r_1} -\frac{1}{2 \cdot (2r_1)^{r_1}}\right) \leq (n-t)!^2,
    \end{align*}
    where the last inequality holds since for any $r_1 \geq 1$ we have $2(\frac{1}{8r_1^2})^{r_1}\leq \frac{1}{2 \cdot (2r_1)^{r_1}}$. 
    
    \item \textbf{Case~2: $B \not\subset U$.} Let $r_2>0$ be the minimal $r$ for which there exists $U' \in \mathcal{U}_{t-r}$ such that $B \cap U' \neq \emptyset$. By the same argument as above, with the roles of $A,B$ interchanged, we have 
\begin{equation}\label{eq:temp3}
    |B \setminus (B\cap U)| \leq 2 \left(\frac{1}{8r_2^2}\right)^{r_2}(n-t)!, \qquad |A\cap U| \leq \left(1-\frac{1}{2 \cdot (2r_2)^{r_2}}\right)(n-t)!. 
\end{equation}
Combining~\eqref{eq:temp2} and~\eqref{eq:temp3}, we obtain
\[
|A| = |A \cap U|+|A \setminus (A\cap U)|\le (n-t)!\cdot \left(1-\frac{1}{2 \cdot (2r_2)^{r_2}}+ 2 \left(\frac{1}{8r_1^2}\right)^{r_1}\right),
\]
and 
\[
|B| = |B \cap U|+|B \setminus (B\cap U)|\le (n-t)!\cdot \left(1-\frac{1}{2 \cdot (2r_1)^{r_1}}+ 2 \left(\frac{1}{8r_2^2}\right)^{r_2}\right).
\]
Hence,
\begin{equation*}
    |A||B| \le (n-t)!^2\cdot \left(1-\frac{1}{2 \cdot (2r_2)^{r_2}}+ 2 \left(\frac{1}{8r_1^2}\right)^{r_1}\right) \left(1-\frac{1}{2 \cdot (2r_1)^{r_1}}+ 2 \left(\frac{1}{8r_2^2}\right)^{r_2}\right).
\end{equation*}
Note that if $x=(1+\beta_1)(1+\beta_2)$ where $\beta_1,\beta_2\geq -2$ and $\beta_1+\beta_2 \leq 0$ then $x \leq 1$. Hence, in order to prove that $|A||B| \leq (n-t)!^2$, it is sufficient to show that 
\[
-\frac{1}{2 \cdot (2r_2)^{r_2}}+ 2 \left(\frac{1}{8r_1^2}\right)^{r_1}-\frac{1}{2 \cdot (2r_1)^{r_1}}+ 2 \left(\frac{1}{8r_2^2}\right)^{r_2} \leq 0.
\]
This indeed holds, as for any $r \geq 1$ we have $2(\frac{1}{8r^2})^{r}\leq \frac{1}{2 \cdot (2r)^{r}}$, and we can use this for both $r_1$ and $r_2$.   
\end{enumerate}
This completes the inductive proof of Proposition~\ref{Prop:main-forbidden}, and thus, of Theorem~\ref{thm:main}.
\end{proof}
The stability version (namely, Theorem~\ref{thm:stability}) follows from the same proof, with a careful analysis of Cases~1,2 at the end of the proof.

\bibliographystyle{plain}
\bibliography{refs}

\end{document}